\newcommand{\blda}{{\bf 4}}
\newcommand{\bldb}{{\bf 2}}
\newcommand{\bldc}{{\bf 3}}
\newtheorem{lemma}{Lemma}
\newtheorem{theorem}{Theorem}
\newtheorem{prop}{Proposition}
\title{An explicit bijection between semistandard tableaux and non-elliptic $sl_3$ webs}
\author{Heather M. Russell}
\address{Department of Mathematics \\ University of Southern California}
\email{heathemr@usc.edu}
\begin{document}
\begin{abstract}
The $sl_3$ spider is a diagrammatic category used to study the representation theory of the quantum group $U_q(sl_3)$. The morphisms in this category are generated by a basis of non-elliptic webs. Khovanov-Kuperberg observed that non-elliptic webs are indexed by semistandard Young tableaux. They establish this bijection via a recursive growth algorithm. Recently, Tymoczko gave a simple version of this bijection in the case that the tableaux are standard and used it to study rotation and joins of webs. We build on Tymoczko's bijection to give a simple and explicit algorithm for constructing all non-elliptic $sl_3$ webs.
\end{abstract}
\maketitle

\section{Introduction}
The $sl_3$ spider, introduced by Kuperberg~\cite{K} and subsequently studied by many others~\cite{KK, Kim, SM, MOY}, is a diagrammatic, braided monoidal category encoding the representation theory of $U_q(sl_3)$. The objects in this category, called sign strings, are finite words in the alphabet $\{+,-\}$ including the empty word. The morphisms are $\mathbb{Z}[q, q^{-1}]$ - linear combinations of certain graphs called webs. See Figure \ref{webexample} for an example of a web.

The objects in the spider can be thought of as tensor products of the two dual 3-dimensional irreducible representations $V^{+}$ and $V^{-}$ of $U_q(sl_3)$, and the morphisms can be thought of as intertwining maps between tensor products of these representations~\cite{K}. Spider categories for other Lie types have been defined. See for instance~\cite{Fon, SM, MOY}. 

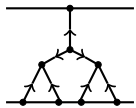
\begin{figure}[h]
\begin{tikzpicture}[baseline=0cm, scale=0.5]
\draw[style=thick](-1.7,0)--(1.7,0);
\draw[style=thick](-1.7,2.5)--(1.7,2.5);
\draw[style=thick, ->] (-1.25,0)--(-1,.5);
\draw[style=thick, -] (-1,.5)--(-.75,1);
\draw[radius=.08, fill=black](-.75,1)circle;
\draw[radius=.08, fill=black](0,2.5)circle;
\draw[radius=.08, fill=black](-1.25,0)circle;
\draw[radius=.08, fill=black](-.3,0)circle;
\draw[radius=.08, fill=black](.3,0)circle;
\draw[radius=.08, fill=black](1.25,0)circle;
\draw[style=thick, ->] (-.25,0)--(-.5,.5);
\draw[style=thick, -] (-.5,.5)--(-.75,1);

\draw[style=thick] (-.75,1)--(-.4,1.2);
\draw[style=thick,<-](-.4,1.2)--(0,1.4);

\draw[radius=.08, fill=black](0,1.4)circle;
\draw[style=thick,->](0,1.4)--(0,1.95);
\draw[style=thick](0,1.95)--(0,2.5);

\draw[ style=thick, ->] (1.25,0)--(1,.5);
\draw[style=thick, -] (1,.5)--(.75,1);
\draw[radius=.08, fill=black](.75,1)circle;
\draw[style=thick, ->] (.25,0)--(.5,.5);
\draw[style=thick, -] (.5,.5)--(.75,1);

\draw[style=thick] (.75,1)--(.4,1.2);
\draw[style=thick,<-](.4,1.2)--(0,1.4);
\end{tikzpicture}
\caption{A web in $\textup{Hom}(++++,+)$.}\label{webexample}
\end{figure}

Webs in the $sl_3$ spider are oriented trivalent graphs drawn in a rectangular region with boundary points lying on the top and bottom edges of that region. Edges incident on the boundary points have orientations compatible with the source and target sign strings. We read webs from bottom to top. All vertices are either sources or sinks. Webs are also subject to Relations \ref{circlerel}, \ref{bigonrel}, and \ref{squarerel} below which are often referred to as the circle, bigon, and square relations respectively. A web with no bigons, squares, or circles is called non-elliptic or irreducible. Every web is a linear combination of non-elliptic webs. We follow the normalization conventions found in Khovanov's work on $sl_3$ link homology~\cite{MK}.
\begin{equation} \label{circlerel}
\raisebox{1pt}{\begin{tikzpicture}[baseline=0cm, scale=0.5]
\draw[style=thick,->] (0,0) arc (0:180:1cm);
\draw[style=thick](-2,0) arc (180:360:1cm);
\end{tikzpicture}} = [3]_q = q^2 + 1 + q^{-2} 
 \end{equation}
 \begin{equation}\label{bigonrel}
 \raisebox{-5pt}{\begin{tikzpicture}[baseline=0cm, scale=0.5]
\draw[style=thick, -<] (0,1) -- (0,1.5);
\draw[style=thick](0,1.5)--(0,2);
\draw[style=thick](0,-1)--(0,-.5);
\draw[style=thick, ->](0,0)--(0,-.5);
\draw[style=thick,->](0,0) .. controls (-.5,.1) and (-.5,.35).. (-.5,.5);
\draw[style=thick](-.5,.5) .. controls (-.5,.65) and (-.5,.9).. (0,1);
\draw[style=thick,->](0,0) .. controls (.5,.1) and (.5,.35).. (.5,.5);
\draw[style=thick](.5,.5) .. controls (.5,.65) and (.5,.9).. (0,1);
\draw[radius=.08, fill=black](0,0)circle;
\draw[radius=.08, fill=black](0,1)circle;
\end{tikzpicture}}= [2]_q \; \;\raisebox{-33pt}{\begin{tikzpicture}[baseline=0cm, scale=0.5]
\draw[style=thick, -<] (0,1) -- (0,2.5);
\draw[style=thick](0,2.5)--(0,4);
\end{tikzpicture}} = (q+q^{-1}) \;\;  \raisebox{-33pt}{\begin{tikzpicture}[baseline=0cm, scale=0.5]
\draw[style=thick, -<] (0,1) -- (0,2.5);
\draw[style=thick](0,2.5)--(0,4);
\end{tikzpicture}}
 \end{equation}
 \begin{equation}\label{squarerel}
 \raisebox{-18pt}{\begin{tikzpicture}[baseline=0cm, scale=0.5]
\draw[style=thick, -<] (0,1) -- (0,1.5);
\draw[style=thick](0,1.5)--(0,2);
\draw[style=thick,->](0,2)--(.5,2);
\draw[style=thick](.5,2)--(1,2);
\draw[style=thick, ->](1,1)--(1,1.5);
\draw[style=thick](1,1.5)--(1,2);
\draw[style=thick, ->](1,1)--(.5,1);
\draw[style=thick](.5,1)--(0,1);
\draw[style=thick,-<](1,2) -- (1.25,2.25);
\draw[style=thick](1.25,2.25)--(1.5,2.5);
\draw[style=thick,-<](0,1) -- (-.25, .75);
\draw[style=thick](-.25, .75)--(-.5, .5);
\draw[style=thick,->](1,1) -- (1.25,.75);
\draw[style=thick](1.25,.75)--(1.5,.5);
\draw[style=thick,->](0,2) -- (-.25,2.25);
\draw[style=thick](-.25,2.25)--(-.5,2.5);
\draw[radius=.08, fill=black](0,1)circle;
\draw[radius=.08, fill=black](0,2)circle;
\draw[radius=.08, fill=black](1,2)circle;
\draw[radius=.08, fill=black](1,1)circle;

\end{tikzpicture}} = \raisebox{-10pt}{\begin{tikzpicture}[baseline=0cm, scale=0.9]
\draw[xshift=1.5cm, style=thick,-<](-.25,0) .. controls (-.5,.2) and (-.5,.5).. (-.5,.5);
\draw[xshift=1.5cm, style=thick](-.5,.5) .. controls (-.5,.45) and (-.5,.9).. (-.25,1);
\draw[style=thick,->](.25,0) .. controls (.5,.2) and (.5,.5).. (.5,.5);
\draw[style=thick](.5,.5) .. controls (.5,.45) and (.5,.9).. (.25,1);
\end{tikzpicture}} +  \raisebox{-16pt}{\begin{tikzpicture}[baseline=0cm, scale=0.9, rotate=90]
\draw[xshift=1.5cm, style=thick,](-.25,0) .. controls (-.5,.2) and (-.5,.5).. (-.5,.5);
\draw[xshift=1.5cm, style=thick](-.5,.5) .. controls (-.5,.45) and (-.5,.9).. (-.25,1);
\draw[xshift=1.5cm,style=thick](-.5,.5)--(-.6,.4);
\draw[xshift=1.5cm,style=thick](-.5,.5)--(-.4,.4);

\draw[style=thick,](.25,0) .. controls (.5,.2) and (.5,.5).. (.5,.5);
\draw[style=thick](.5,.5) .. controls (.5,.45) and (.5,.9).. (.25,1);
\draw[style=thick](.5,.5)--(.6,.6);
\draw[style=thick](.5,.5)--(.4,.6);
\end{tikzpicture}}
\end{equation}

Given a sign string $s$, construct the dual string $s^*$ of $s$ by reversing the order of $s$ and then replacing each $+$ with a $-$ and each $-$ with a $+$. This is really just a diagrammatic version of the statement that, for quantum group representations,  $(V\otimes W)^* \cong W^* \otimes V^*$. Let $\textup{Inv}(V)$ be the space of invariant tensors of $V$ where $V$ is a tensor product of irreducible representations of $U_q(sl_3)$. Since $\textup{Hom}(V,W) \cong \textup{Inv}(V^*, W)$, it is enough to study $sl_3$ webs of the form $\textup{Hom}(s, \emptyset)$.

Let $s=s_1 \ldots s_n$ be a sign string. The dimension of $\textup{Inv}(V^{s_1}\otimes \cdots \otimes V^{s_n})$ is the number of lattice paths in the dominant  Weyl chamber from the origin to itself satisfying some additional condition coming from the string~\cite{KK}. These dominant lattice paths for $s$ correspond to certain words in the alphabet $\{-1, 0, +1\}$. Khovanov-Kuperberg give a recursive growth algorithm which produces a non-elliptic web from a given lattice path word. This growth algorithm establishes a bijection between dominant lattice paths and webs with inverse coming from a depth map on webs~\cite{KK}.

Recall that a semistandard Young tableau is a filling of a Young diagram which strictly increases in columns and weakly increases in rows. The dimension of the invariant tensor space is reformulated by Petersen-Pylyavskyy-Rhoades using the language of semistandard tableaux~\cite{PPR}.
\begin{prop}
Let $s$ be a sign string of length $3n$ with $k$ minuses and $3n-k$ pluses. The number of non-elliptic webs in $Hom(s, \emptyset)$ is equal to the number of semistandard tableaux of shape $(3, 3, \ldots, 3, 3) \vdash  3n$ filled with $\{1^2, \ldots, k^2, k+1, \ldots, 3n-k\}$.
\end{prop}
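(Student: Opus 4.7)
The plan is to compose two bijections. First, I would invoke the Khovanov-Kuperberg correspondence stated immediately above, which identifies non-elliptic webs in $\textup{Hom}(s,\emptyset)$ with dominant lattice paths in the $A_2$ weight lattice whose $i$-th step is a weight of $V^{s_i}$ and which remain in the dominant chamber throughout. Second, I would construct an explicit bijection between these dominant lattice paths and semistandard Young tableaux of shape $(3^n)$ filled with $\{1^2,\ldots,k^2,k+1,\ldots,3n-k\}$.

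For the second bijection, I would record each step of a dominant lattice path by a row index $r_i\in\{1,2,3\}$ indicating which of the three weight vectors of $V^{s_i}$ is used. This produces a word of length $3n$ in the alphabet $\{1,2,3\}$ whose dominance becomes a Yamanouchi-type condition on every prefix (or suffix, depending on convention). By the classical correspondence between Yamanouchi words and rectangular tableaux, such a word determines a filling of shape $(3^n)$. The content is controlled by the sign string: using the crystal isomorphism $V^-\cong\Lambda^2 V^+$, each of the $k$ positions of $-$ in $s$ contributes a doubled entry, producing the repeated values $1^2,\ldots,k^2$, while the remaining $3n-2k$ positions of $+$ contribute the singletons $k+1,\ldots,3n-k$.

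The main obstacle will be verifying that the dominance of the lattice path corresponds precisely to the semistandard conditions (columns strictly increasing, rows weakly increasing), and that the doubling pattern dictated by the positions of the minuses in $s$ realizes exactly the prescribed multiset for every valid $s$. This verification is essentially a translation between lattice-path combinatorics and tableau combinatorics and is carried out in detail by Petersen-Pylyavskyy-Rhoades in~\cite{PPR}; I would appeal to their construction to complete that step and then compose with the Khovanov-Kuperberg bijection to return to webs.
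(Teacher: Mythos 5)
Your proposal is correct and matches the paper's treatment: the paper does not prove this Proposition itself but simply cites it from Petersen--Pylyavskyy--Rhoades, and the argument behind that citation is exactly the composition you describe (the Khovanov--Kuperberg bijection between non-elliptic webs and dominant lattice paths, followed by the translation of dominant paths into Yamanouchi-type words and hence semistandard fillings of the $3\times n$ rectangle, with each minus contributing a doubled entry via $V^-\cong\Lambda^2 V^+$). Your outline is a reasonable expansion of the citation, and deferring the detailed dominance-versus-semistandardness verification to~\cite{PPR} is no less complete than what the paper does.
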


Tymoczko recently gave an explicit bijection between webs in $Hom(+++ \ldots +++, \emptyset)$ and standard tableaux~\cite{T}. This is accomplished by constructing an intermediate object called an $m$-diagram which can then be modified slightly to produce a non-elliptic web. Tymoczko shows that this straightforward procedure provides a concrete realization of the growth algorithm bijection of Khovanov-Kuperberg. 

Building on the $m$-diagram algorithm, this paper provides a simple bijection between all non-elliptic webs and a certain subset of semistandard Young tableaux. We begin by recalling Tymoczko's $m$-diagram algorithm and then describe the generalized bijection providing many examples.  We conclude with two theorems about rotation and join of webs that generalize results of Petersen-Pylyavskyy-Rhoades and Tymoczko  to all $sl_3$ webs~\cite{PPR, T}.

An interesting potential application of this bijection is in the study of Spaltenstein varieties. Combinatorial data from $sl_2$ webs has been used to describe the representation theory and topological structure of Springer varieties, certain flag varietyies used to construct irreducible representations of the symmetric group~\cite{FKK, Fung, KhCr, HMR, Russ, RT, SW}. Spaltenstein varieties are a generalization of Springer varieties using partial flags. 

Just as the components of Springer varieties are indexed by standard tableaux, the components of Spaltenstein varieties are indexed by semistandard tableaux. Recent work of Brundan-Ostrik and Sch\"{a}fer show strong evidence that the combinatorics of the more general class of webs studied here should aid in the study of three-row Spaltenstein varieties~\cite{BO, Sch}.

\subsubsection*{Acknowledgements}
We are very grateful to Julianna Tymoczko for suggesting this project and for many enlightening conversations. Thanks also to Matt Housley for helpful discussions about $\tau$ sets. We also wanted to acknowledge Dongho Moon who has recently obtained similar results.

\section{Tymoczko's m-diagram algorithm}\label{JTalg}

Let $n\in \mathbb{N}$ and consider the partition $(n,n,n) \vdash 3n$. Let $T$ be a standard tableau of shape $(n,n,n)$.  The bijection in this section is between tableaux $T$ and webs with $3n$ source vertices. Note that the number of standard fillings of shape $(n,n,n)$ is the same as the number of standard fillings of the shape $(3, 3, \ldots, 3, 3)$, so this can also be thought of as a bijection  with tableaux of that shape. 

Given a tableau $T$,  the Tymoczko $m$-diagram algorithm constructs the $m$-diagram $m_T$ as follows~\cite{T}.
\begin{itemize}
\item{Draw a horizontal line with $3n$ equally spaced dots labeled from left to right with the numbers $1, \ldots, 3n$. This line forms the lower boundary for the diagram, and all arcs will lie above this line.}
\item{Starting with the smallest number $j$ on the second row, draw a semi-circular arc connecting $j$ to its nearest unoccupied neighbor $i$ to the left. The arcs $(i,j)$  are the left arcs in the $m$-diagram.}
\item{Starting with the smallest number $k$ on the bottom row, draw a semi-circular arc connecting $k$ to its nearest neighbor $j$ to the left that does not already have an arc coming to it from the left. The arcs $(j,k)$ are the right arcs of the $m$-diagram.}
\end{itemize}
The collection of left arcs is nonintersecting as is the collection of right arcs, but left arcs can intersect right arcs. Figure \ref{mdia} has an example of an $m$-diagram.

\begin{figure}[h]
\begin{tikzpicture}[baseline=0cm, scale=.7]
\node at (0,0) {\Large\young(13,25,46)};
\draw[style= ultra thick, ->] (1.5,0)--(2.25,0);
\draw[style=thick] (3,-.7)--(9,-.7);
\draw[radius=.08, fill=black](3.5,-.7)circle;
\draw[radius=.08, fill=black](4.5,-.7)circle;
\draw[radius=.08, fill=black](5.5,-.7)circle;
\draw[radius=.08, fill=black](6.5,-.7)circle;
\draw[radius=.08, fill=black](7.5,-.7)circle;
\draw[radius=.08, fill=black](8.5,-.7)circle;
\draw (4.5,-.7) arc (0:180: .5cm);
\draw (7.5,-.7) arc (0:180: 1cm);
\draw (6.5,-.7) arc (0:180: 1cm);
\draw (8.5,-.7) arc (0:180: .5cm);
\node at (3.5,-1.1) {1};
\node at (4.5,-1.1) {2};
\node at (5.5,-1.1) {3};
\node at (6.5,-1.1) {4};
\node at (7.5,-1.1) {5};
\node at (8.5,-1.1) {6};
\end{tikzpicture}
\caption{The $m$-diagram for a tableau.}\label{mdia}
\end{figure}
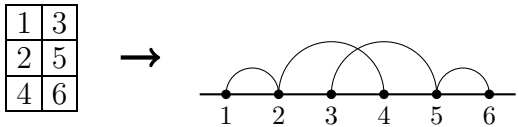

From an $m$-diagram $m_T$ for $T$ there is a straightforward procedure for transforming $m_T$ into a non-elliptic web $w_T$~\cite{T}. 
\begin{itemize}
\item{At each boundary vertex where two semi-circular arcs meet, replace the portion of the diagram in a small neighborhood of the vertex with a `Y` shape as shown in Figure \ref{middle}.}
\item{Orient all arcs away from the boundary so that the branching point of each `Y` becomes a source.}
\item{Finally replace any 4-valent intersection point of a left arc and a right arc with a pair of trivalent vertices as shown in Figure \ref{replace4}. There is a unique way to do this preserving orientation of incoming arcs.}
\end{itemize}

\begin{figure}[h]
\begin{tikzpicture}[baseline=0cm, scale=.6]
\draw[style=thick] (0,0)--(3,0);
\draw[radius=.08, fill=black](1.5,0)circle;
\draw [style=thick](1.5,0) arc (0:100: 1.2cm);
\draw [style=thick] (1.5,0) arc (180:80: 1.2cm);

\draw[style=ultra thick, ->] (3.25,.5) -- (4,.5);

\draw[style=thick] (4.25,0)--(7.25,0);
\draw[radius=.08, fill=black](5.75,0)circle;
\draw[radius=.08, fill=black](5.75,1)circle;
\draw[style=thick,->](5.75,0) -- (5.75,.5);
\draw[style=thick](5.75,.5) -- (5.75,1);
\draw[style=thick, -<] (5.75,1) -- (5.25,1.1);
\draw[style=thick](5.25,1.1) -- (4.75,1.2);
\draw[style=thick, -<] (5.75,1) -- (6.25,1.1);
\draw[style=thick](6.25,1.1) -- (6.75,1.2);
\end{tikzpicture}
\caption{Modifying the middle vertex of an $m$.} \label{middle}
\end{figure}
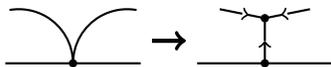

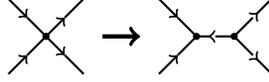
\begin{figure}[h]
\begin{tikzpicture}[baseline=0cm, scale=0.5]
\draw[style=thick, ->] (-1,-1)--(-.5,-.5);
\draw[style=thick, ->] (-1,1)--(-.5,.5);
\draw[style=thick, ->] (-.5,.5)--(.5,-.5);
\draw[style=thick, ->] (-.5,-.5)--(.5,.5);
\draw[style=thick] (.5,.5)--(1,1);
\draw[style=thick] (.5,-.5)--(1,-1);
\draw[radius=.08, fill=black]circle;

\draw[style=ultra thick, ->] (1.5,0)--(2.5,0);

\draw[xshift=4cm, style=thick, ->] (-1,-1)--(-.5,-.5);
\draw[xshift=4cm, style=thick, ->] (-1,1)--(-.5,.5);
\draw[xshift=4cm, style=thick] (-.5,.5)--(0,0);
\draw[xshift=4cm, style=thick ] (-.5,-.5)--(0,0);
\draw[xshift=4cm, style=thick, -< ] (0,0)--(.5,0);
\draw[xshift=4cm, style=thick ] (.5,0)--(1,0);
\draw[xshift=4.5cm, style=thick,>-] (1,.5)--(1.5,1);
\draw[xshift=4.5cm, style=thick] (.5,0)--(1,.5);
\draw[xshift=4.5cm, style=thick] (.5,0)--(1,-.5);
\draw[xshift=4.5cm, style=thick, >-] (1,-.5)--(1.5,-1);
\draw[xshift=4cm, radius=.08, fill=black]circle;
\draw[xshift=5cm, radius=.08, fill=black]circle;
\end{tikzpicture}
\caption{Replacing a 4-valent vertex with trivalent vertices.}\label{replace4}
\end{figure}

For each face of a web $w$, define its depth to be the minimal number of times a path from the given face to the unbounded region must intersect $w$. An example is shown in Figure \ref{depthex}. Depths of adjacent faces differ by at most one. 

\begin{figure}[h]
\begin{tikzpicture}[baseline=0cm, scale=0.7]
\draw[style=thick] (2.5,0)--(8.5,0);
\draw[radius=.08, fill=black](3,0)circle;
\draw[radius=.08, fill=black](4,0)circle;
\draw[radius=.08, fill=black](5,0)circle;
\draw[radius=.08, fill=black](6,0)circle;
\draw[radius=.08, fill=black](7,0)circle;
\draw[radius=.08, fill=black](8,0)circle;
\draw[style=thick,->](4, 0) -- (4,.5);
\draw[style=thick](4,.5)--(4,1);
\draw[radius=.08, fill=black](4,1)circle;
\draw[style=thick,->](3,0)--(3.5,.5);
\draw[style=thick](3.5,.5)--(4,1);
\draw[style=thick,->](7, 0) -- (7,.5);
\draw[style=thick](7,.5)--(7,1);
\draw[radius=.08, fill=black](7,1)circle;
\draw[style=thick,->](8,0)--(7.5,.5);
\draw[style=thick](7.5,.5)--(7,1);
\draw[radius=.08, fill=black](5.5,1)circle;
\draw[style=thick,-<](5.5,1)--(5.5,1.5);
\draw[style=thick](5.5,1.5)--(5.5,2);
\draw[radius=.08, fill=black](5.5,2)circle;
\draw[style=thick,->](5,0)--(5.25,.5);
\draw[style=thick](5.25,.5)--(5.5,1);
\draw[style=thick,->](6,0)--(5.75,.5);
\draw[style=thick](5.75,.5)--(5.5,1);
\draw[style=thick,-<](4,1)--(4.75,1.5);
\draw[style=thick](4.75,1.5)--(5.5,2);
\draw[style=thick,-<](7,1)--(6.25,1.5);
\draw[style=thick](6.25,1.5)--(5.5,2);
\node at (2.7,.3) {0};
\node at (3.7,.3) {1};
\node at (4.7,.3) {1};
\node at (5.5,.3) {2};
\node at (6.3,.3) {1};
\node at (7.3,.3) {1};
\node at (8.3, .3) {0};
\end{tikzpicture}
\caption{The depth map for a web}\label{depthex}
\end{figure}
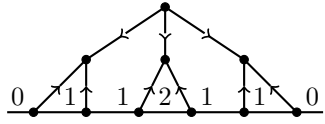

Let $F_{i,L}$ be the face immediately to the left of the edge incident on boundary vertex $i$, and let $F_{i,R}$ be the face immediately to the right of the edge incident on $i$. The following algorithm constructs a standard tableaux $T_w$ from a non-elliptic web $w$ with boundary $3n$ sources. In fact this process is inverse to Tymoczko's web bijection in the sense that $T_{w_T} = T$~\cite{T}.
\begin{itemize}
\item{If the depth of $F_{i,L}$ is less than the depth of $F_{i,R}$, put $i$ in the top row of $T_w$.} 
\item{If the depths of $F_{i,L}$ and $F_{i,R}$ are the same, put $i$ in the middle row of $T_w$.} \item{If the depth of $F_{i,L}$ is greater than the depth of $F_{i,R}$, put $i$ in the bottom row of $T_w$.}
\end{itemize}

Let $\tau(T)$ be the set of all pairs $(i, i+1)$ such that $i$ occurs in a row above $i+1$ in $T$. The terminology of $\tau$ comes from the work of Vogan on primitive spectra of semi simple Lie algebras ~\cite{V}. The $\tau$ set is also often called the descent set of a tableau. Lemma \ref{tausetlemma} appears in an upcoming paper of the author with Housley and Tymoczko where we study the symmetric group action on $sl_3$ webs with $3n$ sources~\cite{HRT}. It is the key idea in establishing a bijection between semistandard tableaux and webs.

\begin{lemma} \label{tausetlemma}
Given a standard tableau $T$ and its associated web $w_T$, if $(i,i+1)\in\tau(T)$ then boundary vertices $i$ and $i+1$ are connected to the same internal vertex in $w_T$.
\end{lemma}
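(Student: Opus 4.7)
The plan is to split into three cases according to which rows $i$ and $i+1$ occupy, using that $(i,i+1)\in\tau(T)$ forces $i$ to lie in a strictly higher row than $i+1$: \textbf{(A)} $i$ in row~1 and $i+1$ in row~2; \textbf{(B)} $i$ in row~2 and $i+1$ in row~3; \textbf{(C)} $i$ in row~1 and $i+1$ in row~3. For Case~(A) I would first show that the algorithm pairs $i$ with $i+1$ by a left arc: any row-2 entry $j<i+1$ must satisfy $j<i$ (since $i$ is in row~1), so when $j$ is processed it pairs with a row-1 entry smaller than itself and hence leaves $i$ unoccupied; when $i+1$ is processed the nearest unoccupied row-1 neighbor on its left is the adjacent $i$, which is then selected. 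Because $i$ and $i+1$ are consecutive integers, no other arc can interleave with $(i,i+1)$, so this arc has no crossings, and the edge from boundary $i$ runs directly along it to the Y-branching vertex of the $m$-shape; the edge from boundary $i+1$ (the middle vertex) runs straight up to the same Y-branch. Case~(B) is handled by the symmetric argument for the right-arc step.

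Case~(C) is the substantive one. Let $b_1$ be the row-2 partner of $i$ produced by the algorithm and $b_2$ the row-2 partner of $i+1$, so the $m$-shapes containing $i$ and $i+1$ are $(i,b_1,c_1)$ and $(a_2,b_2,i+1)$ respectively. Because $b_1\neq i+1$ and $b_2\neq i$ (different rows), one has $b_1\geq i+2$ and $b_2\leq i-1$; hence the left arc $\alpha=(i,b_1)$ and the right arc $\beta=(b_2,i+1)$ interleave and cross at a unique point $P$ sitting above the boundary segment $[i,i+1]$. The core claim is that $P$ is the first crossing along $\alpha$ encountered when leaving boundary $i$ and also the first along $\beta$ leaving $i+1$; equivalently, no other arc $\gamma=(p,q)$ enters the curvilinear triangle bounded by $\alpha$, $\beta$, and the boundary segment $[i,i+1]$. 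Any such $\gamma$ must interleave with both $\alpha$ and $\beta$, forcing $b_2<p<i$ and $i+1<q<b_1$.

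Ruling out such a $\gamma$ is the main obstacle and uses the specifics of the greedy algorithm. If $\gamma$ is a left arc with $q$ in row~2, then at the moment $q$ is processed the entry $i$ is still unoccupied: its eventual partner $b_1$ satisfies $b_1>q$ and has not yet been processed, so no row-2 entry smaller than $q$ has selected $i$. Since $i$ is closer to $q$ than $p$, the algorithm would select $i$ over $p$, contradicting $\gamma=(p,q)$. If instead $\gamma$ is a right arc with $p$ in row~2, then the maximality of $b_2$ at the processing of $i+1$ forces every row-2 entry strictly between $b_2$ and $i+1$ to already hold a right arc to some row-3 entry less than $i$; in particular $p$ cannot also be the row-2 partner of $q>i+1$. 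With the triangle empty, a local analysis at $P$ concludes the proof: the two legs at $P$ pointing toward $i$ and $i+1$ carry matching orientation, so the unique orientation-preserving resolution of the $4$-valent vertex shown in Figure~\ref{replace4} fuses them into a single trivalent vertex $R$, with the two opposite legs forming its companion vertex joined to $R$ by an internal edge. Hence the edges leaving boundaries $i$ and $i+1$ both terminate at $R$, yielding $v(i)=v(i+1)$.
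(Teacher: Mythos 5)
Your proof is correct and follows the same route as the paper's: the identical three-case split by row positions, with cases (A) and (B) handled by showing $i$ and $i+1$ are joined by a single left (resp.\ right) arc, and case (C) by resolving the crossing of the left arc at $i$ with the right arc at $i+1$ as in Figure~\ref{replace4}. The only substantive difference is that you carefully justify what the paper asserts in one line --- that no third arc separates $i$ and $i+1$ from the crossing point $P$ --- and your greedy-algorithm argument for this is correct, though it also follows immediately from the observation that any such arc $\gamma$ would have to cross both the left arc $\alpha$ and the right arc $\beta$, hence an arc of its own type, contradicting the non-crossing of left arcs among themselves and of right arcs among themselves.
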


\begin{proof}
Say $(i,i+1)\in \tau(T)$. Since $T$ has three rows, there are three possibilities:
\begin{enumerate}
\item{$i$ is in the top row, and $i+1$ is in the middle row.}
\item{$i$ is in the middle row, and $i+1$ is in the bottom row.}
\item{$i$ is in the top row, and $i+1$ is in the bottom row.}

\end{enumerate}

In the first case, the boundary vertices $i$ and $i+1$ must be connected by the left arc of an $m$. If this were not the case, then the $m$-diagram for $w_T$ would have two left arcs crossing, which cannot happen. Since $i$ and $i+1$ are adjacent and connected by the arc of an $m$, they will be connected to the same internal vertex in $w_T$.

The second case is completely analogous to the first except that $i$ and $i+1$ are connected by the right arc of an $m$. This once again means that they connect to the same internal vertex in $w_T$.

In the third case $i$ is at the far left of an $m$, and $i+1$ is at the far right of an $m$. Since there are no external vertices between them, they must cross exactly in the manner shown in Figure \ref{tauproof1}. This means that in $w_T$, vertices $i$ and $i+1$ will connect to the same internal vertex.

\begin{figure}[h]
\begin{tikzpicture}[baseline=0cm, scale=0.7]
\draw[style=thick](0,0)--(3,0);
\draw[radius=.08, fill=black](1,0)circle;
\draw[radius=.08, fill=black](2,0)circle;
\draw (1,0) arc (180:80: 1.2cm);
\draw (2,0) arc (0:100: 1.2cm);
\node at (1,-.3) {$i$};
\node at (2,-.3) {$i+1$};

\draw[style=ultra thick, ->] (3.5,.5)--(4.5,.5);

\draw[style=thick](5,0)--(8,0);
\draw[radius=.08, fill=black](6,0)circle;
\draw[radius=.08, fill=black](7,0)circle;
\draw[radius=.08, fill=black](6.5,.5)circle;
\draw[radius=.08, fill=black](6.5,1)circle;
\node at (6,-.3) {$i$};
\node at (7,-.3) {$i+1$};
\draw[style=thick,->](6,0)--(6.25,.25);
\draw[style=thick](6.25,.25)--(6.5, .5);
\draw[style=thick,->](7,0)--(6.75,.25);
\draw[style=thick](6.75,.25)--(6.5, .5);
\draw[style=thick,-<](6.5,.5)--(6.5,.75);
\draw[style=thick](6.5,.75)--(6.5,1);
\draw[style=thick,->](6.5,1)--(6.25,1.1);
\draw[style=thick](6.25,1.1)--(6,1.2);
\draw[style=thick,->](6.5,1)--(6.75,1.1);
\draw[style=thick](6.75,1.1)--(7,1.2);

\end{tikzpicture}
\caption{Case 3: Vertices $i$ and $i+1$ in the $m$-diagram and web.}\label{tauproof1}
\end{figure}
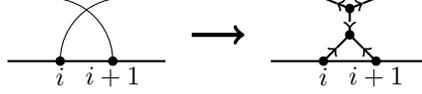
\end{proof}

\section{A bijection between semistandard tableaux and webs}

Given a semistandard tableau $T$, the content $\lambda$ of $T$ is the composition where $\lambda_i$ is the number of occurrences of $i$ in $T$.  Given a sign string $s=s_1 \cdots s_n$ with $k$ minuses define the content of $s$ to be the composition $\lambda_s = (\lambda_{s,1}, \ldots , \lambda_{s,{3n-k}})$ where 
\begin{displaymath}
   \lambda_{s,i} = \left\{
     \begin{array}{lr}
       1 & \textup{if } s_i=+,\\
       2 &  \textup{if } s_i=-.
            \end{array}
   \right.
\end{displaymath} 
This section studies semistandard fillings of $(3,\ldots , 3) \vdash 3n$ of content $\lambda_s$ which we will refer to as fillings of content $s$. For example, the first tableau in Figure \ref{semitostd} has content $s=--+++++$. Note that the number of fillings of content $s$ depends only on the number of pluses and minuses and not on the order in which these symbols appear.

The bijection described in this section works for all sign strings. For ease of notation, we provide explicit instructions for the case that $s = - \cdots - + \cdots +$. The general case is similar. An example is given at the end of  Section \ref{examples}.

Let $s = - \cdots - + \cdots +$ be a sign string consisting of $k$ minuses followed by $3n-k$ pluses. Let $T_s$ be a filling of $(3,\ldots, 3) \vdash 3n$ of content $s$. From $T_s$, construct a standard tableau $\widetilde{T_s}$ by replacing each repeated pair $i, i$ with the numbers $2i-1, 2i$ such that $2i-1$ is to the left of $2i$; for $i>k$, replace $i$ with $i+k$. Figure \ref{semitostd} has an example. For any tableau $T$, write $T'$ for its conjugate. 

\begin{figure}[h]
	$T_s =\raisebox{-15pt}{\young(112,235,467)} \hspace{.25in} \longrightarrow \hspace{.25in} \widetilde{T_s} = \raisebox{-15pt}{\young(124,357,689)} 
	\hspace{.25in} \longrightarrow \hspace{.25in}  \widetilde{T_s}' = \raisebox{-15pt}{\young(136,258,479)}$
	\caption{Obtaining a standard from a semistandard and taking its conjugate.}\label{semitostd}
\end{figure}

 \begin{lemma}
Let $T_s$ be a tableau of content $s$. Then $(2i-1,2i)\in \tau(\widetilde{T_s}')$ for all $1\leq i\leq k$.
 \end{lemma}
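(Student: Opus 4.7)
The plan is to reduce the statement $(2i-1, 2i) \in \tau(\widetilde{T_s}')$ to a claim about column positions in $\widetilde{T_s}$. Since conjugation interchanges rows and columns, the cell containing an entry $x$ lies in row $r$ of $\widetilde{T_s}'$ precisely when it lies in column $r$ of $\widetilde{T_s}$. Thus $(2i-1,2i)\in\tau(\widetilde{T_s}')$, which asks that $2i-1$ sit strictly above $2i$ in $\widetilde{T_s}'$, is equivalent to the assertion that $2i-1$ lies in a strictly smaller column than $2i$ in $\widetilde{T_s}$.

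Next, I trace the cells occupied by $2i-1$ and $2i$ in $\widetilde{T_s}$ back to their origin in $T_s$. Because the standardization procedure only renames entries, these two cells are exactly the two cells of $T_s$ that contained the repeated entry $i$. Since $T_s$ is semistandard and $i\leq k$, the two occurrences of $i$ cannot share a column (columns strictly increase), so they sit in distinct columns, say $c_1<c_2$; they may occupy the same row or different rows, but in either situation their column indices are comparable and distinct.

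Finally, the standardization rule places $2i-1$ in the cell with the smaller column index and $2i$ in the cell with the larger one, since this is the only sensible interpretation of ``$2i-1$ to the left of $2i$'' that is forced to be unambiguous precisely by the observation of the previous paragraph. Combining this with the conjugation translation of the first paragraph gives $(2i-1,2i)\in\tau(\widetilde{T_s}')$, as desired. The lone subtlety, which I view as the main (and minor) obstacle, is justifying that ``left of'' is well defined when the two copies of $i$ appear in different rows; this is handled cleanly by the semistandard column condition, so no case analysis on whether the pair of $i$'s lies in one row or two is actually required.
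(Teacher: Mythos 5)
Your proof is correct and takes essentially the same route as the paper's: semistandardness forces the two copies of $i$ into distinct columns of $T_s$, the standardization rule places $2i-1$ in the smaller column of $\widetilde{T_s}$, and conjugation converts this column inequality into the row inequality defining membership in $\tau(\widetilde{T_s}')$. Your added remark about the well-definedness of ``left of'' is just a slightly more explicit phrasing of the same observation the paper makes.
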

 \begin{proof}
 Since $T_s$ is semistandard, the repeated pairs $i,i$ can never be in the same column of $T_s$. To construct $\widetilde{T_s}$ the leftmost instance of $i$ is replaced with $2i-1$, and the rightmost instance is replaced with $2i$. This means that $2i-1$ will always lie in a row above $2i$ in the conjugate tableau $\widetilde{T_s}'$.
 \end{proof}
 
 Let $w_{\widetilde{T_s}'}$ be the non-elliptic web constructed from $\widetilde{T_s}'$ using the $m$-diagram algorithm in Section \ref{JTalg}. Let $w_{T_s}$ be the web formed by contracting the first $2k$ boundary edges of $w_{\widetilde{T_s}'}$ as shown in Figure \ref{contract}.
 
 \begin{figure}[h]
 \begin{tikzpicture}[baseline=0cm, scale=.8]

\draw[style=thick](5,0)--(8,0);
\draw[radius=.08, fill=black](6,0)circle;
\draw[radius=.08, fill=black](7,0)circle;
\draw[radius=.08, fill=black](6.5,.5)circle;
\draw[radius=.08, fill=black](6.5,1)circle;
\node at (6,-.3) {$2i-1$};
\node at (7,-.3) {$2i$};
\draw[style=thick,->](6,0)--(6.25,.25);
\draw[style=thick](6.25,.25)--(6.5, .5);
\draw[style=thick,->](7,0)--(6.75,.25);
\draw[style=thick](6.75,.25)--(6.5, .5);
\draw[style=thick,-<](6.5,.5)--(6.5,.75);
\draw[style=thick](6.5,.75)--(6.5,1);
\draw[style=thick,->](6.5,1)--(6.25,1.1);
\draw[style=thick](6.25,1.1)--(6,1.2);
\draw[style=thick,->](6.5,1)--(6.75,1.1);
\draw[style=thick](6.75,1.1)--(7,1.2);

\draw[style=ultra thick, ->](8.5, .5)--(9.5, .5);

\draw[style=thick](10,0)--(13,0);
\draw[radius=.08, fill=black](11.5,0)circle;
\draw[radius=.08, fill=black](11.5,1)circle;
\draw[style=thick, -<](11.5,0)--(11.5,.5);
\draw[style=thick](11.5,.5)--(11.5,1);
\draw[style=thick,->](11.5,1)--(11.75,1.1);
\draw[style=thick](11.75,1.1)--(12,1.2);
\draw[style=thick,->](11.5,1)--(11.25,1.1);
\draw[style=thick](11.25,1.1)--(11,1.2);
\node at (11.5, -.3) {$i$};

\end{tikzpicture}
 \caption{Contracting boundary edges to produce a sink vertex.} \label{contract}
 \end{figure}
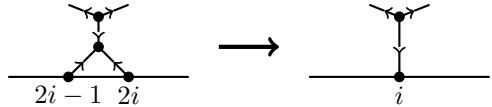
 
 \begin{lemma}
 The sign string associated to the boundary of $w_{T_s}$ is $s$.
 \end{lemma}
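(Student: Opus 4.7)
The plan is to trace the construction step by step. First, Tymoczko's $m$-diagram algorithm applied to the standard tableau $\widetilde{T_s}'$ produces a non-elliptic web $w_{\widetilde{T_s}'}$ with $3n$ boundary vertices; since every arc is oriented away from the boundary, each boundary vertex of $w_{\widetilde{T_s}'}$ is a source, giving sign string $+^{3n}$. The goal is then to show that the $k$ contractions in Figure \ref{contract} turn the first $2k$ sources into $k$ sinks while leaving the remaining $3n - 2k$ sources alone, producing the sign string $s$.

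For each $i \in \{1, \ldots, k\}$, the previous lemma gives $(2i-1, 2i) \in \tau(\widetilde{T_s}')$, so Lemma \ref{tausetlemma} places boundary vertices $2i-1$ and $2i$ at a common internal neighbor $v_i$ in $w_{\widetilde{T_s}'}$. Every edge of a web runs from a source to a sink, so $v_i$ must be a sink, and the subweb near positions $\{2i-1, 2i\}$ is precisely the left-hand side of Figure \ref{contract}. I would then check that the $k$ configurations being contracted are pairwise disjoint: the boundary pairs themselves are disjoint, and since $v_i$ is trivalent it already uses two of its three edges on $2i-1$ and $2i$, so it cannot also be the common neighbor of any other pair $\{2j-1, 2j\}$. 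This disjointness is the one step that requires an argument; the rest is bookkeeping.

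With disjointness in hand, the $k$ contractions can be carried out simultaneously to yield $w_{T_s}$. Each one replaces two adjacent source boundary vertices at positions $2i-1, 2i$ with a single sink boundary vertex at position $i$ (as on the right of Figure \ref{contract}), while the $3n - 2k$ boundary vertices originally at positions $2k+1, \ldots, 3n$ are untouched sources and now occupy positions $k+1, \ldots, 3n-k$ in $w_{T_s}$. The resulting sign string reads $-$ in positions $1, \ldots, k$ followed by $+$ in positions $k+1, \ldots, 3n-k$, which is exactly $s = -\cdots-+\cdots+$.
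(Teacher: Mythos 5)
Your proof is correct and follows essentially the same route as the paper: both use Lemma \ref{tausetlemma} (via the preceding lemma on $\tau(\widetilde{T_s}')$) to identify the common internal vertex of boundary vertices $2i-1$ and $2i$, observe that contraction turns it into a sink boundary vertex, and note that the remaining boundary sources are untouched. The two extra checks you supply --- that the common vertex is a sink because all boundary vertices of $w_{\widetilde{T_s}'}$ are sources, and that the $k$ contracted configurations are pairwise disjoint because each common vertex is trivalent --- are details the paper leaves implicit, not a different argument.
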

 \begin{proof}
 Since the construction of $w_{T_s}$ leaves the last $3n-k$ boundary vertices undisturbed, it is clear that they will be sources since they were sources in $w_{\widetilde{T_s}'}$. Since $(2i-1, 2i)\in \tau(\widetilde{T_s}')$ for all $1\leq i\leq k$, vertices $2i-1$ and $2i$ are connected to the same internal vertex. When we contract the boundary edges incident on vertices $2i-1$ and $2i$, the internal vertex they share becomes a new boundary vertex. This vertex is a sink. Thus, each pair of vertices $2i-1, 2i$ is replaced with a single sink vertex, and the boundary of $w_{T_s}$ consists of $k$ sinks followed by $3n-k$ sources as desired.
 \end{proof}
 
 \begin{lemma}
 The web $w_{T_s}$ is non-elliptic.
 \end{lemma}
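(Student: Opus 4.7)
The plan is to show that the contractions sending $w_{\widetilde{T_s}'}$ to $w_{T_s}$ form a local surgery which only touches boundary faces, so that the internal faces---the only ones relevant to non-ellipticity---survive unchanged. Since $w_{\widetilde{T_s}'}$ is non-elliptic by Tymoczko's algorithm, this will give the lemma immediately.

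First I would fix $1 \leq i \leq k$ and examine the common internal vertex $v_i$ of $2i-1$ and $2i$ furnished by Lemma \ref{tausetlemma}. The trivalent vertex $v_i$ has incident edges $(2i-1, v_i)$, $(2i, v_i)$ and a third edge $e_i$, and the three faces meeting at $v_i$ are bounded pairwise by these edges. Since each of these three faces contains at least one of the boundary-incident edges $(2i-1, v_i)$ or $(2i, v_i)$ on its boundary, each has a boundary vertex of $w_{\widetilde{T_s}'}$ as a corner and is therefore a boundary face. In particular, no internal face of $w_{\widetilde{T_s}'}$ is incident to $v_i$, $2i-1$, or $2i$, and no contracted edge lies on the boundary of any internal face. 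Moreover, since $v_i$ is trivalent and already incident to two distinct boundary vertices, the vertices $v_1, \ldots, v_k$ must be pairwise distinct, so the $k$ contractions act on disjoint edges.

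Next I would track the surgery of Figure \ref{contract}. The triangular boundary face bounded by the boundary segment between $2i-1$ and $2i$ together with the two edges $(2i-1, v_i)$ and $(2i, v_i)$ disappears, while the two lateral faces each lose one edge but still touch the new boundary vertex $\bar v_i$, so they remain boundary faces of $w_{T_s}$. No internal face is altered. Consequently the internal faces of $w_{T_s}$ correspond bijectively to those of $w_{\widetilde{T_s}'}$ with identical edge counts, and non-ellipticity of $w_{\widetilde{T_s}'}$ forces non-ellipticity of $w_{T_s}$.

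I expect the main obstacle to be the structural claim that each of the three faces at $v_i$ is a boundary face. This reduces to the elementary observation that any face adjacent to $(2i-1, v_i)$ (respectively $(2i, v_i)$) must have $2i-1$ (respectively $2i$) as a corner; the claim is geometrically transparent but it is the input on which the whole argument rests.
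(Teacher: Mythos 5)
Your proof is correct and takes the same approach as the paper: the paper's argument is simply the observation that every closed face of $w_{T_s}$ is already a closed face of $w_{\widetilde{T_s}'}$, so non-ellipticity is inherited. You have merely spelled out the local justification (that the contracted edges, being incident to boundary vertices, bound only boundary faces, so the closed faces and their edge counts are untouched) that the paper leaves implicit.
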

 \begin{proof}
 Any closed face of $w_{T_s}$ is also a closed face of $w_{\widetilde{T_s}'}$. Since $w_{\widetilde{T_s}'}$ is non-elliptic, it follows that $w_{T_s}$ is also non-elliptic.
 \end{proof}
 
 \begin{lemma}
 Given two different fillings $T_{s,1}$ and $T_{s,2}$ of content $s$, the webs $w_{T_{s,1}}$ and $w_{T_{s,2}}$ are distinct. 
 \end{lemma}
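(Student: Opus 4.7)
The plan is to establish injectivity of the map $T_s \mapsto w_{T_s}$ by producing an explicit inverse. The forward construction factors as four composable maps: (a) $T_s \mapsto \widetilde{T_s}$ by splitting repeated pairs and shifting, (b) $\widetilde{T_s} \mapsto \widetilde{T_s}'$ by conjugation, (c) $\widetilde{T_s}' \mapsto w_{\widetilde{T_s}'}$ via the $m$-diagram algorithm of Section~\ref{JTalg}, and (d) $w_{\widetilde{T_s}'} \mapsto w_{T_s}$ by contracting the first $2k$ boundary edges as in Figure~\ref{contract}. I would argue each admits an inverse; composing these recovers $T_s$ from $w_{T_s}$, which is precisely injectivity.

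Steps (a), (b), (c) are straightforward to invert. For (a), given $\widetilde{T_s}$, replace each pair $\{2i-1, 2i\}$ with the repeated value $i,i$ for $i \leq k$, and each entry $j > 2k$ with $j - k$; this unambiguously reconstructs $T_s$ because the integers $\{1,\ldots,2k\}$ are canonically paired. Step (b) is an involution. Step (c) is Tymoczko's bijection, whose inverse is the depth-map algorithm already described in Section~\ref{JTalg}.

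The main obstacle is inverting step (d). By the previous lemma, the first $k$ boundary vertices of $w_{T_s}$ are sinks, each of degree one with its unique edge running to an internal source. I would reverse the local move of Figure~\ref{contract} at each such sink: introduce two new source boundary vertices, join them via edges to a new internal sink vertex, and attach that internal sink by a vertical edge to the former boundary sink, which thereby becomes interior. The resulting orientations are forced by the local picture in Figure~\ref{contract}, so this uncontraction is canonical. Because step (d) modifies $w_{\widetilde{T_s}'}$ only inside these $k$ disjoint local regions and leaves the remainder of the web untouched, the uncontraction recovers exactly $w_{\widetilde{T_s}'}$.

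Composing the four inverses gives a well-defined recovery of $T_s$ from $w_{T_s}$, so $w_{T_{s,1}} = w_{T_{s,2}}$ forces $\widetilde{T_{s,1}}' = \widetilde{T_{s,2}}'$, hence $T_{s,1} = T_{s,2}$. The delicate point to check carefully is that the local uncontraction genuinely reproduces $w_{\widetilde{T_s}'}$ rather than some other web: this reduces to verifying that the contraction of (d) is purely local and that the local uncontraction is its literal mirror image, both of which are immediate from Figure~\ref{contract}.
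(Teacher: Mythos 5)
Your argument is correct and is essentially the paper's proof repackaged: both reduce to Tymoczko's bijection for standard tableaux via the observation that the contraction of Figure~\ref{contract} is a purely local move on a configuration whose form is forced by Lemma~\ref{tausetlemma} (two boundary sources feeding a common internal vertex, which is therefore a sink with its third edge incoming), so it loses no information --- the paper phrases this as ``the contracted portions of the two webs are identical, hence the webs must differ elsewhere'' rather than writing down the uncontraction explicitly. Two cosmetic slips that do not affect validity: the new internal sink should simply replace the former boundary sink rather than being attached to it (as described you leave a bivalent vertex behind), and that sink's remaining edge need not run to an internal source (when $2i-1$ and $2i$ are a foot and the middle vertex of the same $m$, it is the other arc of that $m$).
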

 \begin{proof}
 If $T_{s,1}$ and $T_{s,2}$ differ on some repeated number $i$ where $1\leq i \leq k$ then the standard tableaux $\widetilde{T_{s,1}}$ and $\widetilde{T_{s,2}}$ will have at least one of the pair $2i-1, 2i$ in different positions.  If $T_{s,1}$ and $T_{s,2}$ differ on some unrepeated number $i$ where $k<i\leq 3n-k$ then the number $i+k$ will be in different positions in $\widetilde{T_{s,1}}$ and $\widetilde{T_{s,2}}$. In either case $\widetilde{T_{s,1}}$ and $\widetilde{T_{s,2}}$ are distinct.
 
 Since $\widetilde{T_{s,1}}$ and $\widetilde{T_{s,2}}$ must be distinct, it follows that $w_{\widetilde{T_{s,1}}'}$ and $w_{\widetilde{T_{s,2}}'}$ are distinct webs with $3n$ sources. The portions of $w_{\widetilde{T_{s,1}}'}$ and $w_{\widetilde{T_{s,2}}'}$ that are contracted to form $w_{T_{s,1}}$ and $w_{T_{s,2}}$ are identical according to Lemma \ref{tausetlemma}. Therefore $w_{\widetilde{T_{s,1}}'}$ and $w_{\widetilde{T_{s,2}}'}$ must differ away from the first $2k$ boundary edges which means that  $w_{T_{s,1}}$ and $w_{T_{s,2}}$ must be distinct as well.
 \end{proof}
  
 \begin{theorem}
 The map sending fillings $T_s$ of content $s$ to webs $w_{T_s}$ with boundary $s$ is a bijection.
 \end{theorem}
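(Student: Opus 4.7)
The proof is a cardinality argument resting on the four preceding lemmas. Together those lemmas establish, for the canonical string $s = -\cdots - + \cdots +$, that the assignment $T_s \mapsto w_{T_s}$ sends fillings of content $s$ to non-elliptic webs with boundary $s$ and is injective. The only thing left to check is that every non-elliptic web with boundary $s$ arises from some $T_s$.

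To obtain surjectivity I would invoke the Petersen-Pylyavskyy-Rhoades proposition recalled in the introduction, which asserts that the number of non-elliptic webs in $\textup{Hom}(s, \emptyset)$ equals the number of semistandard fillings of $(3,\ldots,3) \vdash 3n$ of content $s$. Because both sides are finite and of the same cardinality, any injection between them is automatically a bijection, and the canonical case is complete.

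For a general sign string $s$ with $k$ minuses distributed arbitrarily, I would extend the construction in the natural way: if the $i$-th minus of $s$ (reading left to right) sits at position $p_i$, then in $\widetilde{T_s}'$ the adjacent pair $(2i-1, 2i)$ occupies the two boundary slots corresponding to $p_i$, and the contraction of Figure \ref{contract} is performed there. Lemma \ref{tausetlemma} continues to guarantee that $(2i-1, 2i) \in \tau(\widetilde{T_s}')$, so each such pair shares an internal vertex of $w_{\widetilde{T_s}'}$ and the contraction is well-defined; the analogues of the four preceding lemmas transfer verbatim. Because the PPR count depends only on $n$ and $k$, not on the arrangement of signs, the same counting argument yields bijectivity in general.

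The main obstacle, such as it is, is purely bookkeeping: one must describe precisely how the positions of the minuses in $s$ encode which boundary edges get contracted, and verify that interleaving the pairs $(2i-1,2i)$ with the singletons does not disturb the $\tau$-set condition. Both checks are immediate because $\tau(\widetilde{T_s}')$ is an intrinsic property of the standard tableau and is insensitive to the order in which we enumerate the minuses of $s$, so no substantive new argument is required beyond the canonical case.
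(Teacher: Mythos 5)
Your proposal matches the paper's argument exactly: the paper likewise deduces bijectivity by combining the injectivity established in the preceding lemmas with the Petersen--Pylyavskyy--Rhoades count showing the two finite sets have equal cardinality. The extra bookkeeping you sketch for general sign strings goes slightly beyond the paper, which simply asserts that the general case is similar to the canonical one.
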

 \begin{proof}
 The previous lemmas show that this map sends distinct semistandard tableaux of content $s$ to distinct webs with boundary $s$. Since these two sets are in bijection, it follows that this map gives a bijective correspondence.
 \end{proof}
 
Given a web $w$ with boundary $s$, construct a tableau ${T_{s_w}}$ of content $s$ as follows. 
 \begin{itemize}
 \item{If the depth of $F_{i,L}$ is less than the depth of $F_{i,R}$ put the pair $i,i$ in the first and second column if vertex $i$ of $w$ is a sink and $i$ in the first column if it is a source.} 
 \item{If $F_{i,L}$ has the same depth as $F_{i,R}$ then put the pair $i,i$ in the first and third column if $i$ is a sink and $i$ in the second column if it is a source.}
 \item{If the depth of $F_{i,L}$ is greater than $F_{i,R}$ then put the pair $i,i$ in the second and third column if $i$ is a sink and $i$ in the third column if it is a source.}
 \end{itemize}
By construction this process is inverse to the algorithm given above for building a web from a semistandard tableau.

\section{Examples} \label{examples}
Consider the sign string $s = ++-++-+$. The composition $\lambda_s$ in this case is $\lambda_s = (1, 1, 2, 1, 1, 2, 1)$. Then the following tableau $T_s$ which is a filling using the numbers $\{1, 2, 3^2, 4, 5, 6^2, 7\}$ is said to have content $s$. We construct the two standard tableaux $\widetilde{T_s}$ and $\widetilde{T_s}'$ using the natural generalization of the algorithm from the previous section. In particular, we replace the ordered set $\{1, 2, 3, 3, 4, 5, 6, 6, 7\}$ with the ordered set $\{1,2, 3, 4, 5, 6, 7, 8, 9\}$ always placing the smaller number farthest left when replacing a repeated pair.
$$T_s = \raisebox{-15pt}{\young(134,256,367)} \hspace{.25in} \longrightarrow \hspace{.25in} \widetilde{T_s} = \raisebox{-15pt}{\young(145,268,379)} \hspace{.25in} \longrightarrow \hspace{.25in} \widetilde{T_s}' = \raisebox{-15pt}{\young(123,467,589)}$$
From the tableau $\widetilde{T_s}'$ we get the $m$-diagram and the web shown in Figure \ref{niceweb}. After contracting the edges incident on vertices 3, 4, 7, and 8 we get the web with boundary $s$ shown in Figure \ref{nicefinalweb}.
\begin{figure}[h]
\raisebox{-35pt}{$m_{\widetilde{T_s}'} =$} $\raisebox{-60pt}{ \begin{tikzpicture}[baseline=0cm, scale=.5]
\draw[style=thick] (.5,0)--(9.5,0);
\draw[radius=.08, fill=black](1,0)circle;
\draw[radius=.08, fill=black](2,0)circle;
\draw[radius=.08, fill=black](3,0)circle;
\draw[radius=.08, fill=black](4,0)circle;
\draw[radius=.08, fill=black](5,0)circle;
\draw[radius=.08, fill=black](6,0)circle;
\draw[radius=.08, fill=black](7,0)circle;
\draw[radius=.08, fill=black](8,0)circle;
\draw[radius=.08, fill=black](9,0)circle;
\draw (4,0) arc (0:180: .5cm);
\draw (6,0) arc (0:180: 2cm);
\draw (7,0) arc (0:180: 3cm);
\draw (5,0) arc (0:180: .5cm);
\draw (8,0) arc (0:180: .5cm);
\draw (9,0) arc (0:180: 1.5cm);
\node at (1,-.5) {1};
\node at (2,-.5) {2};
\node at (3,-.5) {3};
\node at (4,-.5) {4};
\node at (5,-.5) {5};
\node at (6,-.5) {6};
\node at (7,-.5) {7};
\node at (8,-.5) {8};
\node at (9,-.5) {9};
\end{tikzpicture}}$ 
\hspace{1in}
\raisebox{-35pt}{$w_{\widetilde{T_s}'} =$}  \raisebox{-60pt}{\begin{tikzpicture}[baseline=0cm, scale=.5]
\draw[style=thick] (.5,0)--(9.5,0);
\draw[radius=.08, fill=black](1,0)circle;
\draw[radius=.08, fill=black](2,0)circle;
\draw[radius=.08, fill=black](3,0)circle;
\draw[radius=.08, fill=black](4,0)circle;
\draw[radius=.08, fill=black](5,0)circle;
\draw[radius=.08, fill=black](6,0)circle;
\draw[radius=.08, fill=black](7,0)circle;
\draw[radius=.08, fill=black](8,0)circle;
\draw[radius=.08, fill=black](9,0)circle;

\draw[ style=thick,->](4, 0) -- (4,.5);
\draw[ style=thick](4,.5)--(4,1);
\draw[radius=.08, fill=black](4,1)circle;
\draw[style=thick,->](3,0)--(3.5,.5);
\draw[style=thick](3.5,.5)--(4,1);
\draw[style=thick,->](5,0)--(4.5,.5);
\draw[style=thick](4.5,.5)--(4,1);
\draw[ style=thick,->](7, 0) -- (7,.5);
\draw[ style=thick](7,.5)--(7,1);
\draw[radius=.08, fill=black](7,1)circle;
\draw[ style=thick,->](6, 0) -- (6,1.5);
\draw[ style=thick](6,1.5)--(6,3);
\draw[radius=.08, fill=black](6,3)circle;
\draw[style=thick,->](8,0)--(7.5,.5);
\draw[style=thick](7.5,.5)--(7,1);
\draw[style=thick](7,1) -- (6.75,2.5);
\draw[style=thick, <-](6.75,2.5)--(6.5, 4);
\draw[radius=.08, fill=black](6.5,4)circle;
\draw[style=thick](6,3) -- (6.25,3.5);
\draw[style=thick, <-](6.25,3.5)--(6.5, 4);
\draw[style=thick, ->](2,0)--(4, 1.5);
\draw[style=thick](4, 1.5) -- (6, 3);
\draw[style=thick,->](6.5,4) -- (6.5,4.5);
\draw[style=thick](6.5,4.5)--(6.5,5);
\draw[radius=.08, fill=black](6.5,5)circle;
\draw[style=thick, ->](1,0)--(3.75,2.5);
\draw[style=thick](3.75,2.5)--(6.5,5);
\draw[style=thick,->](9,0)--(7.75,2.5);
\draw[style=thick](7.75,2.5)--(6.5,5);
\node at (1,-.5) {1};
\node at (2,-.5) {2};
\node at (3,-.5) {3};
\node at (4,-.5) {4};
\node at (5,-.5) {5};
\node at (6,-.5) {6};
\node at (7,-.5) {7};
\node at (8,-.5) {8};
\node at (9,-.5) {9};
\end{tikzpicture}}
\caption{The $m$-diagram and web corresponding to $\widetilde{T_s}'$.}\label{niceweb}
\end{figure}
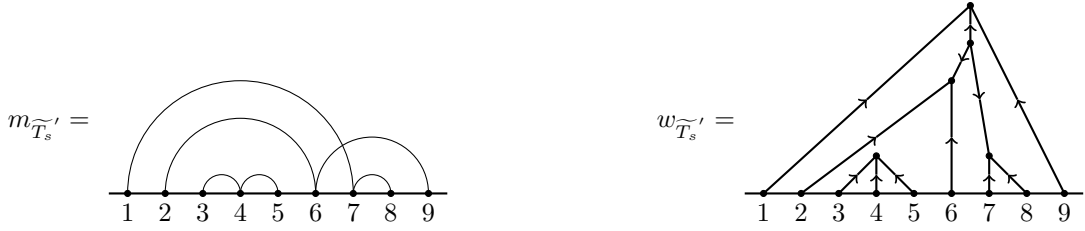

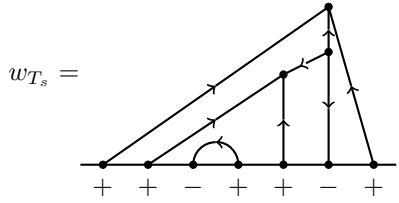
\begin{figure}[h]
\raisebox{-8pt}{$w_{T_s} =  $}\raisebox{-40pt}{\begin{tikzpicture}[baseline=0cm, scale=.6]
\draw[style=thick] (.5,0)--(7.5,0);
\draw[radius=.08, fill=black](1,0)circle;
\draw[radius=.08, fill=black](2,0)circle;
\draw[radius=.08, fill=black](3,0)circle;
\draw[radius=.08, fill=black](4,0)circle;
\draw[radius=.08, fill=black](5,0)circle;
\draw[radius=.08, fill=black](6,0)circle;
\draw[radius=.08, fill=black](7,0)circle;

\draw[style=thick, ->](4,0) arc (0:90:.5cm);
\draw[style=thick](3.5,.5) arc (90:180:.5cm);
\draw[style=thick, ->](5,0)--(5,1);
\draw[style=thick](5,1)--(5,2);
\draw[radius=.08, fill=black](5,2)circle;
\draw[style=thick,->](2,0) -- (3.5,1);
\draw[style=thick](3.5,1)--(5,2);
\draw[style=thick,-<](5,2)--(5.5,2.25);
\draw[style=thick](5.5,2.25)--(6,2.5);
\draw[radius=.08, fill=black](6,2.5)circle;
\draw[style=thick,->](6,2.5) -- (6,1.25);
\draw[style=thick](6,1.25)--(6,0);
\draw[style=thick,->](6,2.5)--(6,3);
\draw[style=thick](6,3)--(6,3.5);
\draw[radius=.08, fill=black](6,3.5)circle;
\draw[style=thick,->](7,0)--(6.5,1.75);
\draw[style=thick](6.5,1.75)--(6,3.5);
\draw[style=thick,->](1,0)--(3.5,1.75);
\draw[style=thick](3.5,1.75)--(6,3.5);
\node at (1,-.5) {+};
\node at (2,-.5) {+};
\node at (3,-.5) {$-$};
\node at (4,-.5) {+};
\node at (5,-.5) {+};
\node at (6,-.5) {$-$};
\node at (7,-.5) {+};
\end{tikzpicture}}
\caption{The web of content $s$ corresponding to $T_s$.}\label{nicefinalweb}
\end{figure}

We conclude this section with some additional examples. Figures \ref{s1},\ref{s2}, and \ref{s3} construct all webs corresponding to the sign strings $---, --++,$ and $-++++$ respectively. 

\begin{figure}[h]
\begin{tabular}{|c|c|c|c|c|c|}
\hline &&&&& \\
$T_{s}$ & $\widetilde{T_{s}}$ & $\widetilde{T_{s}}'$ & $m$-diagram & $w_{\widetilde{T_{s}}'}$ & $w_{T_{s}}$ \\
&&&&&\\ \hline
 &&&&&\\
\young(112,233)&\young(124,356)& \raisebox{-8pt}{\young(13,25,46) }& \raisebox{10pt}{\begin{tikzpicture}[baseline=0cm, scale=.6]
\draw[style=thick] (3,-.7)--(9,-.7);
\draw[radius=.08, fill=black](3.5,-.7)circle;
\draw[radius=.08, fill=black](4.5,-.7)circle;
\draw[radius=.08, fill=black](5.5,-.7)circle;
\draw[radius=.08, fill=black](6.5,-.7)circle;
\draw[radius=.08, fill=black](7.5,-.7)circle;
\draw[radius=.08, fill=black](8.5,-.7)circle;
\draw (4.5,-.7) arc (0:180: .5cm);
\draw (7.5,-.7) arc (0:180: 1cm);
\draw (6.5,-.7) arc (0:180: 1cm);
\draw (8.5,-.7) arc (0:180: .5cm);
\node at (3.5,-1.1) {1};
\node at (4.5,-1.1) {2};
\node at (5.5,-1.1) {3};
\node at (6.5,-1.1) {4};
\node at (7.5,-1.1) {5};
\node at (8.5,-1.1) {6};
\end{tikzpicture}} &
\raisebox{10pt}{\begin{tikzpicture}[baseline=0cm, scale=.6]
\draw[style=thick] (2.5,-.7)--(8.5,-.7);
\draw[radius=.08, fill=black](3,-.7)circle;
\draw[radius=.08, fill=black](4,-.7)circle;
\draw[radius=.08, fill=black](5,-.7)circle;
\draw[radius=.08, fill=black](6,-.7)circle;
\draw[radius=.08, fill=black](7,-.7)circle;
\draw[radius=.08, fill=black](8,-.7)circle;
\draw[yshift=-.7cm, style=thick,->](4, 0) -- (4,.5);
\draw[yshift=-.7cm, style=thick](4,.5)--(4,1);
\draw[yshift=-.7cm, radius=.08, fill=black](4,1)circle;
\draw[yshift=-.7cm, style=thick,->](3,0)--(3.5,.5);
\draw[yshift=-.7cm, style=thick](3.5,.5)--(4,1);
\draw[yshift=-.7cm, style=thick,->](7, 0) -- (7,.5);
\draw[yshift=-.7cm, style=thick](7,.5)--(7,1);
\draw[yshift=-.7cm, radius=.08, fill=black](7,1)circle;
\draw[yshift=-.7cm, style=thick,->](8,0)--(7.5,.5);
\draw[yshift=-.7cm, style=thick](7.5,.5)--(7,1);
\draw[yshift=-.7cm, radius=.08, fill=black](5.5,1)circle;
\draw[yshift=-.7cm, style=thick,-<](5.5,1)--(5.5,1.5);
\draw[yshift=-.7cm, style=thick](5.5,1.5)--(5.5,2);
\draw[yshift=-.7cm, radius=.08, fill=black](5.5,2)circle;
\draw[yshift=-.7cm, style=thick,->](5,0)--(5.25,.5);
\draw[yshift=-.7cm, style=thick](5.25,.5)--(5.5,1);
\draw[yshift=-.7cm, style=thick,->](6,0)--(5.75,.5);
\draw[yshift=-.7cm, style=thick](5.75,.5)--(5.5,1);
\draw[yshift=-.7cm, style=thick,-<](4,1)--(4.75,1.5);
\draw[yshift=-.7cm, style=thick](4.75,1.5)--(5.5,2);
\draw[yshift=-.7cm, style=thick,-<](7,1)--(6.25,1.5);
\draw[yshift=-.7cm, style=thick](6.25,1.5)--(5.5,2);
\end{tikzpicture}}   & \raisebox{-2pt}{\begin{tikzpicture}[baseline=0cm, scale=.6]
\draw[style=thick] (5.5,0)--(8.5,0);
\draw[radius=.08, fill=black](6,0)circle;
\draw[radius=.08, fill=black](7,0)circle;
\draw[radius=.08, fill=black](8,0)circle;

\draw[xshift=3cm, style=thick,-<](4, 0) -- (4,.5);
\draw[xshift=3cm, style=thick](4,.5)--(4,1);
\draw[xshift=3cm, radius=.08, fill=black](4,1)circle;
\draw[xshift=3cm,  style=thick,-<](3,0)--(3.5,.5);
\draw[xshift=3cm, style=thick](3.5,.5)--(4,1);
\draw[xshift=3cm, style=thick,-<](5,0)--(4.5,.5);
\draw[xshift=3cm, style=thick](4.5,.5)--(4,1);
\node at (6,-.5) {$-$};
\node at (7,-.5) {$-$};
\node at (8,-.5) {$-$};
\end{tikzpicture}} \\ \hline
 \end{tabular}
 \caption{Constructing the bijection for sign string $s=---$.}\label{s1}
 \end{figure}
 
 \begin{figure}[h]
\begin{tabular}{|c|c|c|c|c|c|}
\hline &&&&& \\
$T_{s}$ & $\widetilde{T_{s}}$ & $\widetilde{T_{s}}'$ & $m$-diagram & $w_{\widetilde{T_{s}}'}$ & $w_{T_{s}}$ \\
&&&&&\\ \hline
&&&&&\\
 \young(112,234) & \young(124,356) & \raisebox{-8pt}{\young(13,25,46) }& \raisebox{10pt}{\begin{tikzpicture}[baseline=0cm, scale=.6]
\draw[style=thick] (3,-.7)--(9,-.7);
\draw[radius=.08, fill=black](3.5,-.7)circle;
\draw[radius=.08, fill=black](4.5,-.7)circle;
\draw[radius=.08, fill=black](5.5,-.7)circle;
\draw[radius=.08, fill=black](6.5,-.7)circle;
\draw[radius=.08, fill=black](7.5,-.7)circle;
\draw[radius=.08, fill=black](8.5,-.7)circle;
\draw (4.5,-.7) arc (0:180: .5cm);
\draw (7.5,-.7) arc (0:180: 1cm);
\draw (6.5,-.7) arc (0:180: 1cm);
\draw (8.5,-.7) arc (0:180: .5cm);
\node at (3.5,-1.1) {1};
\node at (4.5,-1.1) {2};
\node at (5.5,-1.1) {3};
\node at (6.5,-1.1) {4};
\node at (7.5,-1.1) {5};
\node at (8.5,-1.1) {6};
\end{tikzpicture}} &
\raisebox{10pt}{\begin{tikzpicture}[baseline=0cm, scale=.6]
\draw[style=thick] (2.5,-.7)--(8.5,-.7);
\draw[radius=.08, fill=black](3,-.7)circle;
\draw[radius=.08, fill=black](4,-.7)circle;
\draw[radius=.08, fill=black](5,-.7)circle;
\draw[radius=.08, fill=black](6,-.7)circle;
\draw[radius=.08, fill=black](7,-.7)circle;
\draw[radius=.08, fill=black](8,-.7)circle;
\draw[yshift=-.7cm, style=thick,->](4, 0) -- (4,.5);
\draw[yshift=-.7cm, style=thick](4,.5)--(4,1);
\draw[yshift=-.7cm, radius=.08, fill=black](4,1)circle;
\draw[yshift=-.7cm, style=thick,->](3,0)--(3.5,.5);
\draw[yshift=-.7cm, style=thick](3.5,.5)--(4,1);
\draw[yshift=-.7cm, style=thick,->](7, 0) -- (7,.5);
\draw[yshift=-.7cm, style=thick](7,.5)--(7,1);
\draw[yshift=-.7cm, radius=.08, fill=black](7,1)circle;
\draw[yshift=-.7cm, style=thick,->](8,0)--(7.5,.5);
\draw[yshift=-.7cm, style=thick](7.5,.5)--(7,1);
\draw[yshift=-.7cm, radius=.08, fill=black](5.5,1)circle;
\draw[yshift=-.7cm, style=thick,-<](5.5,1)--(5.5,1.5);
\draw[yshift=-.7cm, style=thick](5.5,1.5)--(5.5,2);
\draw[yshift=-.7cm, radius=.08, fill=black](5.5,2)circle;
\draw[yshift=-.7cm, style=thick,->](5,0)--(5.25,.5);
\draw[yshift=-.7cm, style=thick](5.25,.5)--(5.5,1);
\draw[yshift=-.7cm, style=thick,->](6,0)--(5.75,.5);
\draw[yshift=-.7cm, style=thick](5.75,.5)--(5.5,1);
\draw[yshift=-.7cm, style=thick,-<](4,1)--(4.75,1.5);
\draw[yshift=-.7cm, style=thick](4.75,1.5)--(5.5,2);
\draw[yshift=-.7cm, style=thick,-<](7,1)--(6.25,1.5);
\draw[yshift=-.7cm, style=thick](6.25,1.5)--(5.5,2);
\end{tikzpicture}}   &\raisebox{-2pt}{ \begin{tikzpicture}[baseline=0cm, scale=.6]
\draw[style=thick] (4.5,0)--(8.5,0);

\draw[radius=.08, fill=black](5,0)circle;
\draw[radius=.08, fill=black](6,0)circle;
\draw[radius=.08, fill=black](7,0)circle;
\draw[radius=.08, fill=black](8,0)circle;

\node at (5,-.5) {$-$};
\node at (6,-.5) {$-$};
\node at (7,-.5) {+};
\node at (8,-.5){+};

\draw[style=thick,->](7, 0) -- (7,.5);
\draw[style=thick](7,.5)--(7,1);
\draw[style=thick,-<](6, 0) -- (6,.5);
\draw[style=thick](6,.5)--(6,1);
\draw[radius=.08, fill=black](7,1)circle;
\draw[radius=.08, fill=black](6,1)circle;
\draw[style=thick,->](8,0)--(7.5,.5);
\draw[style=thick](7.5,.5)--(7,1);
\draw[style=thick, ->] (6,1) -- (6.5,1);
\draw[style=thick](6.5,1)--(7,1);

\draw[style=thick,-<](5,0)--(5.5,.5);
\draw[style=thick](5.5,.5)--(6,1);

\end{tikzpicture}}\\ 
 &&&&& \\\hline
 &&&&& \\
 \young(113,224) & \young(125,346) & \raisebox{-8pt}{\young(13,24,56) }& \raisebox{10pt}{\begin{tikzpicture}[baseline=0cm, scale=.6]
\draw[style=thick] (3,-.7)--(9,-.7);
\draw[radius=.08, fill=black](3.5,-.7)circle;
\draw[radius=.08, fill=black](4.5,-.7)circle;
\draw[radius=.08, fill=black](5.5,-.7)circle;
\draw[radius=.08, fill=black](6.5,-.7)circle;
\draw[radius=.08, fill=black](7.5,-.7)circle;
\draw[radius=.08, fill=black](8.5,-.7)circle;
\draw (4.5,-.7) arc (0:180: .5cm);
\draw (8.5,-.7) arc (0:180: 2cm);
\draw (6.5,-.7) arc (0:180: .5cm);
\draw (7.5,-.7) arc (0:180: .5cm);
\node at (3.5,-1.1) {1};
\node at (4.5,-1.1) {2};
\node at (5.5,-1.1) {3};
\node at (6.5,-1.1) {4};
\node at (7.5,-1.1) {5};
\node at (8.5,-1.1) {6};
\end{tikzpicture}} &
\raisebox{10pt}{\begin{tikzpicture}[baseline=0cm, scale=.6]
\draw[yshift=-.7cm, style=thick] (2.5,0)--(8.5,0);
\draw[yshift=-.7cm, radius=.08, fill=black](3,0)circle;
\draw[yshift=-.7cm, radius=.08, fill=black](4,0)circle;
\draw[yshift=-.7cm, radius=.08, fill=black](5,0)circle;
\draw[yshift=-.7cm, radius=.08, fill=black](6,0)circle;
\draw[yshift=-.7cm, radius=.08, fill=black](7,0)circle;
\draw[yshift=-.7cm, radius=.08, fill=black](8,0)circle;

\draw[yshift=-.7cm, style=thick,->](4, 0) -- (4,1);
\draw[yshift=-.7cm, style=thick](4,.75)--(4,2);
\draw[yshift=-.7cm, radius=.08, fill=black](4,2)circle;
\draw[yshift=-.7cm, style=thick,->](3,0)--(3.5,1);
\draw[yshift=-.7cm, style=thick](3.5,1)--(4,2);
\draw[yshift=-.7cm, style=thick,->](8,0)--(6,1);
\draw[yshift=-.7cm,style=thick](6,1)--(4,2);

\draw[xshift=2cm, yshift=-.7cm, style=thick,->](4, 0) -- (4,.25);
\draw[xshift=2cm, yshift=-.7cm, style=thick](4,.2)--(4,.5);
\draw[xshift=2cm, yshift=-.7cm, radius=.08, fill=black](4,.5)circle;
\draw[xshift=2cm, yshift=-.7cm, style=thick,->](3,0)--(3.5,.25);
\draw[xshift=2cm, yshift=-.7cm, style=thick](3.5,.25)--(4,.5);
\draw[xshift=2cm, yshift=-.7cm, style=thick,->](5,0)--(4.5,.25);
\draw[xshift=2cm, yshift=-.7cm, style=thick](4.5,.25)--(4,.5);
\end{tikzpicture}} &  \raisebox{-2pt}{\begin{tikzpicture}[baseline=0cm, scale=.6]
\draw[style=thick] (4.5,0)--(8.5,0);
\draw[radius=.08, fill=black](5,0)circle;
\draw[radius=.08, fill=black](6,0)circle;
\draw[radius=.08, fill=black](7,0)circle;
\draw[radius=.08, fill=black](8,0)circle;

\node at (5,-.5) {$-$};
\node at (6,-.5) {$-$};
\node at (7,-.5) {+};
\node at (8,-.5){+};

\draw[style=thick, ->] (8,0) arc (0:90: 1.5cm);
\draw[style=thick](6.5,1.5) arc (90:180:1.5cm);
\draw[style=thick, ->] (7,0) arc (0:90: .5cm);
\draw[style=thick](6.5,.5) arc (90:180:.5cm);
\end{tikzpicture}}  \\  
&&&&& \\ \hline
\end{tabular}
\caption{Constructing the bijection for sign string $s=--++$.}\label{s2}
\end{figure}

\begin{figure}[h]
\begin{tabular}{|c|c|c|c|c|c|}
\hline &&&&& \\
$T_{s}$ & $\widetilde{T_{s}}$ & $\widetilde{T_{s}}'$ & $m$-diagram & $w_{\widetilde{T_{s}}'}$ & $w_{T_{s}}$ \\
&&&&&\\  \hline
&&&&&\\
 \young(112,345) & \young(123,456) & \raisebox{-8pt}{\young(14,25,36) }& \raisebox{10pt}{\begin{tikzpicture}[baseline=0cm, scale=.6]
\draw[style=thick] (3,-.7)--(9,-.7);
\draw[radius=.08, fill=black](3.5,-.7)circle;
\draw[radius=.08, fill=black](4.5,-.7)circle;
\draw[radius=.08, fill=black](5.5,-.7)circle;
\draw[radius=.08, fill=black](6.5,-.7)circle;
\draw[radius=.08, fill=black](7.5,-.7)circle;
\draw[radius=.08, fill=black](8.5,-.7)circle;
\draw (4.5,-.7) arc (0:180: .5cm);
\draw (6.5,-.7) arc (180:0: .5cm);
\draw (5.5,-.7) arc (0:180: .5cm);
\draw (8.5,-.7) arc (0:180: .5cm);
\node at (3.5,-1.1) {1};
\node at (4.5,-1.1) {2};
\node at (5.5,-1.1) {3};
\node at (6.5,-1.1) {4};
\node at (7.5,-1.1) {5};
\node at (8.5,-1.1) {6};
\end{tikzpicture}} & \raisebox{10pt}{ \begin{tikzpicture}[baseline=0cm, scale=.6]
\draw[yshift=-.7cm, style=thick] (2.5,0)--(8.5,0);
\draw[yshift=-.7cm, radius=.08, fill=black](3,0)circle;
\draw[yshift=-.7cm, radius=.08, fill=black](4,0)circle;
\draw[yshift=-.7cm, radius=.08, fill=black](5,0)circle;
\draw[yshift=-.7cm, radius=.08, fill=black](6,0)circle;
\draw[yshift=-.7cm, radius=.08, fill=black](7,0)circle;
\draw[yshift=-.7cm, radius=.08, fill=black](8,0)circle;

\draw[yshift=-.7cm, style=thick,->](4, 0) -- (4,.5);
\draw[yshift=-.7cm, style=thick](4,.5)--(4,1);
\draw[yshift=-.7cm, radius=.08, fill=black](4,1)circle;
\draw[yshift=-.7cm, style=thick,->](3,0)--(3.5,.5);
\draw[yshift=-.7cm, style=thick](3.5,.5)--(4,1);
\draw[yshift=-.7cm, style=thick,->](5,0)--(4.5,.5);
\draw[yshift=-.7cm, style=thick](4.5,.5)--(4,1);

\draw[xshift=3cm, yshift=-.7cm, style=thick,->](4, 0) -- (4,.5);
\draw[xshift=3cm, yshift=-.7cm, style=thick](4,.5)--(4,1);
\draw[xshift=3cm, yshift=-.7cm, radius=.08, fill=black](4,1)circle;
\draw[xshift=3cm, yshift=-.7cm, style=thick,->](3,0)--(3.5,.5);
\draw[xshift=3cm, yshift=-.7cm, style=thick](3.5,.5)--(4,1);
\draw[xshift=3cm, yshift=-.7cm, style=thick,->](5,0)--(4.5,.5);
\draw[xshift=3cm, yshift=-.7cm, style=thick](4.5,.5)--(4,1);
\end{tikzpicture}} & \raisebox{-2pt}{\begin{tikzpicture}[baseline=0cm, scale=.6]
\draw[style=thick] (3.5,0)--(8.5,0);
\draw[radius=.08, fill=black](4,0)circle;
\draw[radius=.08, fill=black](5,0)circle;
\draw[radius=.08, fill=black](6,0)circle;
\draw[radius=.08, fill=black](7,0)circle;
\draw[radius=.08, fill=black](8,0)circle;
\draw[style=thick, ->] (5,0) arc (0:90: .5cm);
\draw[style=thick](4.5,.5) arc (90:180:.5cm);
\draw[xshift=3cm, style=thick,->](4, 0) -- (4,.5);
\draw[xshift=3cm, style=thick](4,.5)--(4,1);
\draw[xshift=3cm, radius=.08, fill=black](4,1)circle;
\draw[xshift=3cm,  style=thick,->](3,0)--(3.5,.5);
\draw[xshift=3cm, style=thick](3.5,.5)--(4,1);
\draw[xshift=3cm, style=thick,->](5,0)--(4.5,.5);
\draw[xshift=3cm, style=thick](4.5,.5)--(4,1);
\node at (4,-.5) {$-$};
\node at (5,-.5) {+};
\node at (6,-.5) {+};
\node at (7,-.5) {+};
\node at (8,-.5){+};
\end{tikzpicture}}\\
&&&&& \\
\hline &&&&&\\
\young(114,235)& \young(125,346) &\raisebox{-8pt}{ \young(13,24,56)}& \raisebox{10pt}{\begin{tikzpicture}[baseline=0cm, scale=.6]
\draw[style=thick] (3,-.7)--(9,-.7);
\draw[radius=.08, fill=black](3.5,-.7)circle;
\draw[radius=.08, fill=black](4.5,-.7)circle;
\draw[radius=.08, fill=black](5.5,-.7)circle;
\draw[radius=.08, fill=black](6.5,-.7)circle;
\draw[radius=.08, fill=black](7.5,-.7)circle;
\draw[radius=.08, fill=black](8.5,-.7)circle;
\draw (4.5,-.7) arc (0:180: .5cm);
\draw (8.5,-.7) arc (0:180: 2cm);
\draw (6.5,-.7) arc (0:180: .5cm);
\draw (7.5,-.7) arc (0:180: .5cm);
\node at (3.5,-1.1) {1};
\node at (4.5,-1.1) {2};
\node at (5.5,-1.1) {3};
\node at (6.5,-1.1) {4};
\node at (7.5,-1.1) {5};
\node at (8.5,-1.1) {6};
\end{tikzpicture}} &
\raisebox{10pt}{\begin{tikzpicture}[baseline=0cm, scale=.6]
\draw[yshift=-.7cm, style=thick] (2.5,0)--(8.5,0);
\draw[yshift=-.7cm, radius=.08, fill=black](3,0)circle;
\draw[yshift=-.7cm, radius=.08, fill=black](4,0)circle;
\draw[yshift=-.7cm, radius=.08, fill=black](5,0)circle;
\draw[yshift=-.7cm, radius=.08, fill=black](6,0)circle;
\draw[yshift=-.7cm, radius=.08, fill=black](7,0)circle;
\draw[yshift=-.7cm, radius=.08, fill=black](8,0)circle;

\draw[yshift=-.7cm, style=thick,->](4, 0) -- (4,1);
\draw[yshift=-.7cm, style=thick](4,.75)--(4,2);
\draw[yshift=-.7cm, radius=.08, fill=black](4,2)circle;
\draw[yshift=-.7cm, style=thick,->](3,0)--(3.5,1);
\draw[yshift=-.7cm, style=thick](3.5,1)--(4,2);
\draw[yshift=-.7cm, style=thick,->](8,0)--(6,1);
\draw[yshift=-.7cm,style=thick](6,1)--(4,2);

\draw[xshift=2cm, yshift=-.7cm, style=thick,->](4, 0) -- (4,.25);
\draw[xshift=2cm, yshift=-.7cm, style=thick](4,.2)--(4,.5);
\draw[xshift=2cm, yshift=-.7cm, radius=.08, fill=black](4,.5)circle;
\draw[xshift=2cm, yshift=-.7cm, style=thick,->](3,0)--(3.5,.25);
\draw[xshift=2cm, yshift=-.7cm, style=thick](3.5,.25)--(4,.5);
\draw[xshift=2cm, yshift=-.7cm, style=thick,->](5,0)--(4.5,.25);
\draw[xshift=2cm, yshift=-.7cm, style=thick](4.5,.25)--(4,.5);
\end{tikzpicture}} &\raisebox{-2pt}{ \begin{tikzpicture}[baseline=0cm, scale=.6]
\draw[style=thick] (3.5,0)--(8.5,0);

\draw[radius=.08, fill=black](4,0)circle;
\draw[radius=.08, fill=black](5,0)circle;
\draw[radius=.08, fill=black](6,0)circle;
\draw[radius=.08, fill=black](7,0)circle;
\draw[radius=.08, fill=black](8,0)circle;

\draw[style=thick, ->] (8,0) arc (0:90: 2cm);
\draw[style=thick](6,2) arc (90:180:2cm);

\draw[xshift=2cm, style=thick,->](4, 0) -- (4,.25);
\draw[xshift=2cm, style=thick](4,.2)--(4,.5);
\draw[xshift=2cm, radius=.08, fill=black](4,.5)circle;
\draw[xshift=2cm, style=thick,->](3,0)--(3.5,.25);
\draw[xshift=2cm, style=thick](3.5,.25)--(4,.5);
\draw[xshift=2cm, style=thick,->](5,0)--(4.5,.25);
\draw[xshift=2cm,  style=thick](4.5,.25)--(4,.5);

\node at (4,-.5) {$-$};
\node at (5,-.5) {+};
\node at (6,-.5) {+};
\node at (7,-.5) {+};
\node at (8,-.5){+};
\end{tikzpicture}} \\ 
&&&&&\\ \hline
&&&&& \\
\young(113,245) & \young(124,356) & \raisebox{-8pt}{ \young(13,25,46)} & \raisebox{10pt}{\begin{tikzpicture}[baseline=0cm, scale=.6]
\draw[style=thick] (3,-.7)--(9,-.7);
\draw[radius=.08, fill=black](3.5,-.7)circle;
\draw[radius=.08, fill=black](4.5,-.7)circle;
\draw[radius=.08, fill=black](5.5,-.7)circle;
\draw[radius=.08, fill=black](6.5,-.7)circle;
\draw[radius=.08, fill=black](7.5,-.7)circle;
\draw[radius=.08, fill=black](8.5,-.7)circle;
\draw (4.5,-.7) arc (0:180: .5cm);
\draw (7.5,-.7) arc (0:180: 1cm);
\draw (6.5,-.7) arc (0:180: 1cm);
\draw (8.5,-.7) arc (0:180: .5cm);
\node at (3.5,-1.1) {1};
\node at (4.5,-1.1) {2};
\node at (5.5,-1.1) {3};
\node at (6.5,-1.1) {4};
\node at (7.5,-1.1) {5};
\node at (8.5,-1.1) {6};
\end{tikzpicture}} &
\raisebox{10pt}{\begin{tikzpicture}[baseline=0cm, scale=.6]
\draw[style=thick] (2.5,-.7)--(8.5,-.7);
\draw[radius=.08, fill=black](3,-.7)circle;
\draw[radius=.08, fill=black](4,-.7)circle;
\draw[radius=.08, fill=black](5,-.7)circle;
\draw[radius=.08, fill=black](6,-.7)circle;
\draw[radius=.08, fill=black](7,-.7)circle;
\draw[radius=.08, fill=black](8,-.7)circle;
\draw[yshift=-.7cm, style=thick,->](4, 0) -- (4,.5);
\draw[yshift=-.7cm, style=thick](4,.5)--(4,1);
\draw[yshift=-.7cm, radius=.08, fill=black](4,1)circle;
\draw[yshift=-.7cm, style=thick,->](3,0)--(3.5,.5);
\draw[yshift=-.7cm, style=thick](3.5,.5)--(4,1);
\draw[yshift=-.7cm, style=thick,->](7, 0) -- (7,.5);
\draw[yshift=-.7cm, style=thick](7,.5)--(7,1);
\draw[yshift=-.7cm, radius=.08, fill=black](7,1)circle;
\draw[yshift=-.7cm, style=thick,->](8,0)--(7.5,.5);
\draw[yshift=-.7cm, style=thick](7.5,.5)--(7,1);
\draw[yshift=-.7cm, radius=.08, fill=black](5.5,1)circle;
\draw[yshift=-.7cm, style=thick,-<](5.5,1)--(5.5,1.5);
\draw[yshift=-.7cm, style=thick](5.5,1.5)--(5.5,2);
\draw[yshift=-.7cm, radius=.08, fill=black](5.5,2)circle;
\draw[yshift=-.7cm, style=thick,->](5,0)--(5.25,.5);
\draw[yshift=-.7cm, style=thick](5.25,.5)--(5.5,1);
\draw[yshift=-.7cm, style=thick,->](6,0)--(5.75,.5);
\draw[yshift=-.7cm, style=thick](5.75,.5)--(5.5,1);
\draw[yshift=-.7cm, style=thick,-<](4,1)--(4.75,1.5);
\draw[yshift=-.7cm, style=thick](4.75,1.5)--(5.5,2);
\draw[yshift=-.7cm, style=thick,-<](7,1)--(6.25,1.5);
\draw[yshift=-.7cm, style=thick](6.25,1.5)--(5.5,2);
\end{tikzpicture}} & \raisebox{-2pt}{\begin{tikzpicture}[baseline=0cm, scale=.6]
\draw[style=thick] (3.5,0)--(8.5,0);

\draw[radius=.08, fill=black](4,0)circle;
\draw[radius=.08, fill=black](5,0)circle;
\draw[radius=.08, fill=black](6,0)circle;
\draw[radius=.08, fill=black](7,0)circle;
\draw[radius=.08, fill=black](8,0)circle;

\draw[style=thick, ->](5.5,2) -- (4.75,1);
\draw[style=thick](4.75,1)--(4,0);

\draw[style=thick,->](7, 0) -- (7,.5);
\draw[style=thick](7,.5)--(7,1);
\draw[radius=.08, fill=black](7,1)circle;
\draw[style=thick,->](8,0)--(7.5,.5);
\draw[style=thick](7.5,.5)--(7,1);
\draw[radius=.08, fill=black](5.5,1)circle;
\draw[style=thick,-<](5.5,1)--(5.5,1.5);
\draw[style=thick](5.5,1.5)--(5.5,2);
\draw[radius=.08, fill=black](5.5,2)circle;
\draw[style=thick,->](5,0)--(5.25,.5);
\draw[style=thick](5.25,.5)--(5.5,1);
\draw[style=thick,->](6,0)--(5.75,.5);
\draw[style=thick](5.75,.5)--(5.5,1);
\draw[style=thick,-<](7,1)--(6.25,1.5);
\draw[style=thick](6.25,1.5)--(5.5,2);

\node at (4,-.5) {$-$};
\node at (5,-.5) {+};
\node at (6,-.5) {+};
\node at (7,-.5) {+};
\node at (8,-.5){+};
\end{tikzpicture}} \\ 
&&&&& \\ \hline
\end{tabular}
\caption{Constructing the bijection for sign string $s=-++++$.}\label{s3}
\end{figure}

\section{Applications: Rotation and join of webs}
When the sign string of a web is all pluses and the corresponding tableau is standard, Petersen-Pylyavskyy-Rhoades prove that rotation of webs corresponds to jeu-de-taquin promotion~\cite{PPR}. Tymoczko uses the $m$-diagram algorithm to give a simplified proof of this fact~\cite{T}. There is also a notion of the join of two webs. Tymoczko proves that join can be understood as another move on standard tableaux called a shuffle~\cite{T}. In this section, we prove that jeu-de-taquin promotion and shuffle of semistandard tableaux correspond to rotation and join of all non-elliptic $sl_3$ webs.

\subsection{Rotation and jeu-de-taquin promotion}
Jeu-de-taquin promotion is a process on semistandard Young tableaux whereby a box (or subset of boxes) is removed, and the tableau is rearranged to form a new filling of the same shape. Figure \ref{jdt} has an example. Say that $T$ is a semistandard tableaux filled with at least one of each of the numbers $1, \ldots, \ell$. Jeu-de-taquin promotion on $T$ produces a new tableau $jdt(T)$ as follows. 
\begin{enumerate}
\item{Begin by removing the entry 1 from the top left corner of $T$.}
\item{Say that $a$ is below and $b$ is to the right of the removed box. If $a\leq b$, slide $a$ upwards into the empty position. Otherwise, slide $b$ left into the empty position.}
\item{Continue this process until the empty box has no entries to its right or below.}
\item{If 1 appears multiple times in $T$, repeat the first three steps until all 1's have been erased.}
\item{Decrement all entries by 1, and replace each empty box with $\ell$.}
\end{enumerate}

A proof of the following Lemma can be found in Sagan's book~\cite{Sagan}.
\begin{lemma}
The jeu-de-taquin process described above is well-defined on semistandard tableaux.
\end{lemma}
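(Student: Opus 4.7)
The plan is to verify well-definedness by establishing four things: (i) each slide is unambiguous, (ii) the slides preserve a natural semistandard invariant for the tableau with holes, (iii) the sliding process terminates, and (iv) the final decrement-and-refill step produces a semistandard tableau. Determinism is immediate from the rule, breaking ties by sliding the lower entry up (if $a$ or $b$ is absent, the other is used; if both are absent the hole is at an outer corner and the process stops), and termination follows because each slide strictly increases the row or column index of the hole while the shape is finite.

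For the invariant, I would show that at every intermediate stage the punctured tableau (ignoring holes) is semistandard on every pair of adjacent filled boxes. The inductive step for the slide of $a$ at $(i+1,j)$ up into the hole at $(i,j)$ (with $a \leq b$ at $(i,j+1)$) requires checking row-weak against the left neighbor $c$ at $(i,j-1)$ and column-strict against the upper neighbor $e$ at $(i-1,j)$. Chaining via the entry $c'$ at $(i+1,j-1)$ gives $c < c' \leq a$ hence $c \leq a$, while $e < a$ follows by induction on how the hole arrived at $(i,j)$: either $e$ is the entry that just slid up from $(i,j)$ in the previous step (so $e < a$ by column-strict in the previous state), or there was an intermediate entry $\alpha$ at $(i,j)$ with $e < \alpha < a$. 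The initial hole at $(1,1)$ trivializes the base case, and the $a > b$ subcase is handled symmetrically by swapping the roles of rows and columns.

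The main obstacle is verifying the refilling step. After all $1$'s are removed and slides completed, the filled entries lie in $\{2,\dots,\ell\}$; decrement brings them to $\{1,\dots,\ell-1\}$ and we place $\ell$ in each hole. Semistandardness at any hole box against a filled neighbor is automatic, since decremented entries are strictly less than $\ell$. The delicate point is showing that no two hole positions share a column, which would force two vertically adjacent $\ell$'s and violate column-strict. I would argue this by induction on the number of removal-slide passes, using the fact that the original $1$'s occupy a contiguous prefix of the first row (by semistandardness) so each pass begins by removing the entry currently at $(1,1)$, and by carefully tracking how a new hole's trajectory is constrained by the outer corners created in previous passes together with the strict inequality established in the sliding invariant. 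This column-distinctness is the technical heart of the argument, and it is the step I expect to require the most care.
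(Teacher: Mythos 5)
The paper does not actually prove this lemma; it simply cites Sagan's book for the standard fact, so there is no in-house argument to compare against. Your proposal attempts a direct proof, and most of it is on the right track: the slide rule is deterministic, each slide moves the hole strictly down or to the right so the process terminates, and your local checks (the chain $c < c' \leq a$ for row-weakness against the left neighbor, and the induction on how the hole arrived for column-strictness against the upper neighbor) are the standard verification that a single slide preserves semistandardness of the punctured tableau.

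The genuine gap is the step you yourself flag and then defer. You never establish that the cells left vacant after all the removal-slide passes occupy pairwise distinct columns, which is exactly what the final refill with $\ell$ needs in order to be column-strict; saying you would argue it ``by carefully tracking how a new hole's trajectory is constrained'' is a statement of intent, not an argument. This is the one nontrivial point of the entire lemma, and it is precisely the content of the classical result (proved in Sagan, and going back to Sch\"utzenberger) that successive jeu-de-taquin slide paths starting from the cells of a horizontal strip are noncrossing, so that each new path terminates in a column strictly to the left of the previous terminal cell and the vacated cells again form a horizontal strip. Until that noncrossing lemma (or an equivalent) is actually proved, the proposal does not close; everything else in the sketch is routine bookkeeping around this missing core.
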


\begin{figure}[h]
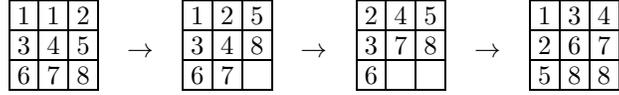

$$\young(112,345,678) \hspace{.15in} \raisebox{12pt}{$\rightarrow$} \hspace{.15in}  \young(125,348,67_{})  \hspace{.15in} \raisebox{12pt}{$\rightarrow$}  \hspace{.15in}  \young(245,378,6\hfil\hfil)  \hspace{.15in} \raisebox{12pt}{$\rightarrow$}  \hspace{.15in} \young(134,267,588)$$
\caption{Promotion on a semistandard tableau.}\label{jdt}
\end{figure}

\begin{lemma} \label{semijdt}
Let $T_s$ be a semistandard tableau of content $s = s_1\cdots s_n$. Then $\widetilde{jdt(T_s)}' = jdt(\widetilde{T_s}')$ if $s_1$ is a plus, and $\widetilde{jdt(T_s)}' = jdt\left(jdt\left(\widetilde{T_s}'\right)\right)$ if $s_1$ is a minus.
\end{lemma}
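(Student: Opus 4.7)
The plan is to reduce the stated identity to the non-conjugate analogue
\[
\widetilde{jdt(T_s)} \;=\; jdt^{c}(\widetilde{T_s}), \qquad c = \lambda_{s,1} \in \{1,2\},
\]
and then pass to conjugates using the fact that $jdt$ commutes with transposition on standard tableaux of rectangular shape. Combining these two ingredients yields
\[
\widetilde{jdt(T_s)}' \;=\; jdt^{c}(\widetilde{T_s})' \;=\; jdt^{c}(\widetilde{T_s}'),
\]
which is precisely the lemma, with $c = 1$ when $s_1 = +$ and $c = 2$ when $s_1 = -$.

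For the commutation $jdt(T)' = jdt(T')$ on a standard tableau $T$ of rectangular shape, I would argue directly at the level of slide paths. Transposition sends the cell $(i,j)$ to $(j,i)$ and swaps the roles of ``below'' and ``right'' at every cell. Since $T$ is standard, all entries are distinct, so the rule ``slide the smaller of below and right'' never encounters a tie and is self-dual under this swap; the slide path in $T'$ is therefore the image of the slide path in $T$ under $(i,j)\mapsto(j,i)$. The subsequent decrement-all-entries and fill-empty-corner-with-$\ell$ steps are carried out pointwise and commute with transposition, so $jdt(T)' = jdt(T')$; iterating gives $jdt^{c}(T)' = jdt^{c}(T')$ for every $c$.

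The heart of the proof is the intermediate identity $\widetilde{jdt(T_s)} = jdt^{c}(\widetilde{T_s})$. The key structural observation is that in $T_s$ each value appears at most twice, and any repeated pair lies within a single row (because columns of a semistandard tableau are strictly increasing). Consequently, at every step of a $jdt$ slide on $T_s$ the cell directly below and the cell directly to the right of the empty box lie in distinct rows and hence cannot contain equal values; the semistandard tie-breaking rule is never invoked, and the slide path removing the $k$-th occurrence of $1$ from $T_s$ coincides cell-for-cell with the standard slide path of the $k$-th iterated $jdt$ on $\widetilde{T_s}$. The case $c=1$ reduces immediately to the standard setting after one slide, decrement, and fill. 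In the case $c=2$, $\widetilde{T_s}$ performs a decrement-and-fill between its two standard $jdt$s while $T_s$ does not; the main technical step is to check that the label $3n$ freshly placed at the corner emptied by the first standard $jdt$ either leaves the second slide path unchanged or extends it by exactly one cell into that corner, and that after the concluding decrement-and-fill the positions and labels in both constructions coincide with those of $\widetilde{jdt(T_s)}$. This bookkeeping at the last few cells of the second slide path is the principal obstacle, but it is a finite case check; once dispatched, the intermediate identity follows and the lemma follows from the combination displayed above.
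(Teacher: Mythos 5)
There is a genuine gap at the heart of your argument for the intermediate identity $\widetilde{jdt(T_s)} = jdt^{c}(\widetilde{T_s})$. You claim that any repeated pair in $T_s$ lies within a single row, and that consequently the cells below and to the right of the empty box never hold equal values, so the tie-breaking clause of the semistandard slide is never invoked. Both claims are false: strict increase down columns only forbids a repeated pair from sharing a column, and the pair may perfectly well occupy the diagonally adjacent positions $(r+1,c)$ and $(r,c+1)$, which is exactly the configuration that produces a tie during a slide. This already happens in the paper's running example $T_s = \raisebox{-15pt}{\young(112,235,467)}$ of content $--+++++$: after the first $1$ is removed and its slide completed, removing the second $1$ from the top-left corner leaves an empty box whose neighbor below and neighbor to the right both contain $2$. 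So ties do occur, and precisely inside the second slide of the $c=2$ case that you defer to a ``finite case check.''

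This is not a cosmetic omission, because the entire content of the intermediate identity is that the semistandard tie-break (slide the cell below when $a=b$) agrees with what the standardized tableau does. The paper's proof is essentially this one observation: in a tie the cell below sits in column $c$ and the cell to the right in column $c+1$, so the cell below is the \emph{leftmost} occurrence of the repeated value and therefore receives the smaller label $2i-1$ under the standardization convention; hence the standard slide also moves the cell below, and the two slide paths agree cell for cell. Had the standardization used the opposite convention, the lemma would fail, which shows the tie analysis cannot be waved away. Your surrounding architecture --- reduce to the unconjugated statement with $c=\lambda_{s,1}$ slides, then invoke commutation of promotion with conjugation on standard tableaux --- is the same as the paper's, and your remarks on the $c=2$ bookkeeping are if anything more explicit than the paper's one-sentence treatment; but as written the key step rests on a false statement and must be repaired by the tie-break/leftmost-label argument above.
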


\begin{proof}
When the numbers below and to the right of an empty box are equal, jeu-de-taquin promotion chooses to move the box below into the empty position. Since we construct the standard tableau $\widetilde{T_s}$ from $T_s$ by replacing the leftmost instance of a repeated entry with a smaller number than the rightmost instance, it follows that $\widetilde{jdt(T_s)} = jdt(\widetilde{T_s})$ when $s_1$ is a plus. If $s_1$ is a minus, then the numbers 1 and 2 must be promoted in the standard tableau to correspond to the promotion of a repeated entry of 1 in the semistandard tableau. Thus  $\widetilde{jdt(T_s)} = jdt\left(jdt\left(\widetilde{T_s}\right)\right)$ in the case that $s_2$ is a minus. The lemma follows from the fact that jeu-de-taquin promotion commutes with the process of conjugation in standard tableaux.
\end{proof}

We have been considering webs with boundary lying on a horizontal line, but  webs are often also viewed in a disk with univalent vertices on the boundary circle \cite{K,PPR}. Boundary vertices are enumerated counterclockwise with respect to some base point. To obtain a web with linear boundary, the circle bounding the disk is split open at the base point. The notion of web rotation is more natural when viewed from the disk perspective. Figure \ref{Rotate} has an example of rotation on a web with linear boundary.

\begin{theorem}
Jeu-de-taquin promotion of semistandard tableaux corresponds to rotation of webs.
\end{theorem}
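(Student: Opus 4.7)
The plan is to reduce rotation of the semistandard-tableau web $w_{T_s}$ to rotation of the associated standard-tableau web $w_{\widetilde{T_s}'}$, and then invoke the correspondence between jeu-de-taquin promotion and rotation in the standard (all-plus) case established by Petersen-Pylyavskyy-Rhoades and Tymoczko. The translation between the semistandard and standard settings is furnished by Lemma \ref{semijdt}. Write $\rho$ for the rotation operator on webs.

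I would split into two cases according to the first sign $s_1$. If $s_1 = +$, then vertex $1$ of $w_{T_s}$ is a source inherited unchanged from $w_{\widetilde{T_s}'}$, so one rotation of $w_{T_s}$ in the disk picture corresponds to one rotation of $w_{\widetilde{T_s}'}$. By the standard-tableau theorem $\rho(w_{\widetilde{T_s}'}) = w_{jdt(\widetilde{T_s}')}$, and by Lemma \ref{semijdt} we have $jdt(\widetilde{T_s}') = \widetilde{jdt(T_s)}'$; performing the boundary-edge contractions prescribed by the rotated sign string then gives $\rho(w_{T_s}) = w_{jdt(T_s)}$. If $s_1 = -$, then vertex $1$ of $w_{T_s}$ is a sink formed by contracting the pair of boundary edges at vertices $1$ and $2$ of $w_{\widetilde{T_s}'}$, so one rotation of $w_{T_s}$ corresponds to two rotations of $w_{\widetilde{T_s}'}$. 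Applying the standard-tableau theorem twice gives $\rho^2(w_{\widetilde{T_s}'}) = w_{jdt^2(\widetilde{T_s}')}$, and Lemma \ref{semijdt} yields $jdt^2(\widetilde{T_s}') = \widetilde{jdt(T_s)}'$; contracting the relevant pair again produces $\rho(w_{T_s}) = w_{jdt(T_s)}$.

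The main obstacle is verifying that rotation and contraction commute in the right way. Specifically, in the $s_1 = -$ case one must confirm that after two rotations of $w_{\widetilde{T_s}'}$ the two vertices that were originally at positions $1$ and $2$, now at positions $3n-1$ and $3n$, still share a common internal vertex, so that the final contraction producing the sink at the last boundary position of $\rho(w_{T_s})$ is well-defined. This is intuitively clear in the disk picture since rotation does not alter the internal graph, but it is verified combinatorially by applying Lemma \ref{tausetlemma} to $\widetilde{jdt(T_s)}'$: because this tableau arises from a semistandard filling via the construction of Section 3, the relevant pair lies in its $\tau$-set, guaranteeing that those boundary vertices are connected through a common internal vertex. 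One must also track that the content of $jdt(T_s)$ matches the rotated sign string $s_2\cdots s_n s_1$, which follows because promotion cyclically permutes multiplicities of entries. The general case (where $s$ is an arbitrary sign string rather than $-\cdots-+\cdots+$) follows by the same argument applied at the position being rotated past, once the bijection of Section 3 is extended to general $s$ as described there.
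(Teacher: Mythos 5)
Your proposal is correct and follows essentially the same route as the paper: reduce to the all-sources case via Lemma \ref{semijdt} (one application of $jdt$ on the standard tableau when $s_1=+$, two when $s_1=-$), invoke the known correspondence between promotion and rotation for standard tableaux, and then contract boundary edges. The paper's own proof is terser and leaves implicit the points you spell out (the case split on $s_1$ and the compatibility of contraction with rotation, which you justify via Lemma \ref{tausetlemma}), but the underlying argument is the same.
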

\begin{proof}
Jeu-de-taquin promotion in standard tableaux corresponds to rotation of webs  with boundary $3n$ sources. Since $\widetilde{T_s}'$ is a standard tableau, the web $w_{jdt(\widetilde{T_s}')}$ is a rotation of the web $w_{\widetilde{T_s}'}$. It follows from Lemma \ref{semijdt} that the web $w_{\widetilde{jdt(T_s)}'}$ is also a rotation of $w_{\widetilde{T_s}'}$. Therefore the web obtained by rotation of $w_{T_s}$ is the same as the web $w_{jdt(T_s)}$. 
\end{proof}

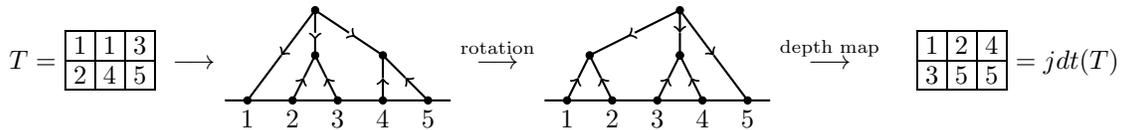
\begin{figure}[h]
\raisebox{12pt}{$T = $} \raisebox{4pt}{\young(113,245) } \raisebox{12pt}{$\longrightarrow$} \begin{tikzpicture}[baseline=0cm, scale=.6]
\draw[style=thick] (3.5,0)--(8.5,0);

\draw[radius=.08, fill=black](4,0)circle;
\draw[radius=.08, fill=black](5,0)circle;
\draw[radius=.08, fill=black](6,0)circle;
\draw[radius=.08, fill=black](7,0)circle;
\draw[radius=.08, fill=black](8,0)circle;

\draw[style=thick, ->](5.5,2) -- (4.75,1);
\draw[style=thick](4.75,1)--(4,0);

\draw[style=thick,->](7, 0) -- (7,.5);
\draw[style=thick](7,.5)--(7,1);
\draw[radius=.08, fill=black](7,1)circle;
\draw[style=thick,->](8,0)--(7.5,.5);
\draw[style=thick](7.5,.5)--(7,1);
\draw[radius=.08, fill=black](5.5,1)circle;
\draw[style=thick,-<](5.5,1)--(5.5,1.5);
\draw[style=thick](5.5,1.5)--(5.5,2);
\draw[radius=.08, fill=black](5.5,2)circle;
\draw[style=thick,->](5,0)--(5.25,.5);
\draw[style=thick](5.25,.5)--(5.5,1);
\draw[style=thick,->](6,0)--(5.75,.5);
\draw[style=thick](5.75,.5)--(5.5,1);
\draw[style=thick,-<](7,1)--(6.25,1.5);
\draw[style=thick](6.25,1.5)--(5.5,2);

\node at (4,-.4) {1};
\node at (5,-.4) {2};
\node at (6,-.4) {3};
\node at (7,-.4) {4};
\node at (8,-.4) {5};
\end{tikzpicture}
\raisebox{12pt}{$\stackrel{\textup{rotation}}{\longrightarrow}$}
\begin{tikzpicture}[baseline=0cm, scale=.6]
\draw[style=thick] (4.5,0)--(9.5,0);

\draw[radius=.08, fill=black](5,0)circle;
\draw[radius=.08, fill=black](6,0)circle;
\draw[radius=.08, fill=black](7,0)circle;
\draw[radius=.08, fill=black](8,0)circle;
\draw[radius=.08, fill=black](9,0)circle;

\draw[radius=.08, fill=black](7.5,1)circle;

\draw[radius=.08, fill=black](5.5,1)circle;
\draw[style=thick,-<](5.5,1)--(6.5,1.5);
\draw[style=thick](6.5,1.5)--(7.5,2);
\draw[radius=.08, fill=black](7.5,2)circle;
\draw[style=thick,->](5,0)--(5.25,.5);
\draw[style=thick](5.25,.5)--(5.5,1);
\draw[style=thick,->](6,0)--(5.75,.5);
\draw[style=thick](5.75,.5)--(5.5,1);
\draw[style=thick, ->](7.5,2) -- (7.5,1.5);
\draw[style=thick](7.5,1.5)--(7.5,1);
\draw[style=thick, ->](7.5,2)--(8.25,1);
\draw[style=thick](8.25,1)--(9,0);

\draw[radius=.08, fill=black](7.5,1)circle;

\draw[style=thick,->](7,0)--(7.25,.5);
\draw[style=thick](7.25,.5)--(7.5,1);
\draw[style=thick,->](8,0)--(7.75,.5);
\draw[style=thick](7.75,.5)--(7.5,1);
\node at (5,-.4) {1};
\node at (6,-.4) {2};
\node at (7,-.4) {3};
\node at (8,-.4) {4};
\node at (9,-.4) {5};
\end{tikzpicture}
\raisebox{12pt}{$\stackrel{\textup{depth map}}{\longrightarrow}$} \hspace{.15in}\raisebox{4pt}{\young(124,355) }\raisebox{12pt}{$= jdt(T)$}
\caption{Rotation of a web.} \label{Rotate}
\end{figure}

\subsection{Join and shuffling}
Tymoczko defines the notion of a shuffle of two standard tableaux and proves that the web associated to the shuffle is the join of the webs corresponding to those tableaux~\cite{T}. The join of two webs is the insertion of one into the other between some designated pair of vertices. The definition of shuffle has a natural extension to semistandard tableaux. 

Let $T$ be a semistandard tableau filled with at least one of each of the numbers $1,\ldots, \ell_1$ and $T'$ be a semistandard tableau filled with at least one of each of the numbers $1,\ldots, \ell_2$. Let $i\leq \ell_1$. The shuffle of $T'$ into $T$ at $i$ is denoted by $T'\stackrel{i}{\mapsto} T$ and defined by the following process. 
\begin{itemize}
\item{For each instance of $j = 1, \ldots, i$ in $T$, put $j$ in the same column of $T'\stackrel{i}{\mapsto} T$ as in $T$.}
\item{For each instance of $j = 1, \ldots, \ell_2$ in $T'$, put $j+i$ in the same column of $T'\stackrel{i}{\mapsto} T$ as in $T'$.}
\item{For each instance of $j=i+1, \ldots, \ell_1$ in $T$, put $j+\ell_2$ in the same column of $T'\stackrel{i}{\mapsto} T$ as in $T$.}
\end{itemize}
Figure \ref{joinweb} compares the shuffling of tableaux to the join of webs. The entries of $T'$ that have been shuffled into $T$ appear in bold.
\begin{figure}[h]
\raisebox{10pt}{$T =$} \raisebox{6pt}{ \young(114,235)} \raisebox{10pt}{$ \longrightarrow $}
\begin{tikzpicture}[baseline=0cm, scale=.6]
\draw[style=thick] (3.5,0)--(8.5,0);

\draw[radius=.08, fill=black](4,0)circle;
\draw[radius=.08, fill=black](5,0)circle;
\draw[radius=.08, fill=black](6,0)circle;
\draw[radius=.08, fill=black](7,0)circle;
\draw[radius=.08, fill=black](8,0)circle;

\draw[style=thick, ->] (8,0) arc (0:90: 2cm);
\draw[style=thick](6,2) arc (90:180:2cm);

\draw[xshift=2cm, style=thick,->](4, 0) -- (4,.25);
\draw[xshift=2cm, style=thick](4,.2)--(4,.5);
\draw[xshift=2cm, radius=.08, fill=black](4,.5)circle;
\draw[xshift=2cm, style=thick,->](3,0)--(3.5,.25);
\draw[xshift=2cm, style=thick](3.5,.25)--(4,.5);
\draw[xshift=2cm, style=thick,->](5,0)--(4.5,.25);
\draw[xshift=2cm,  style=thick](4.5,.25)--(4,.5);
\end{tikzpicture}
\hspace{.5in}
\raisebox{10pt}{$T' =$} \raisebox{6pt}{ \young(123)} \raisebox{10pt}{$ \longrightarrow $}
\begin{tikzpicture}[baseline=0cm, scale=.6]
\draw[ style=thick] (2.5,0)--(5.5,0);
\draw[ radius=.08, fill=black](3,0)circle;
\draw[ radius=.08, fill=black](4,0)circle;
\draw[radius=.08, fill=black](5,0)circle;

\draw[style=thick,->](4, 0) -- (4,.5);
\draw[style=thick](4,.5)--(4,1);
\draw[radius=.08, fill=black](4,1)circle;
\draw[style=thick,->](3,0)--(3.5,.5);
\draw[style=thick](3.5,.5)--(4,1);
\draw[style=thick,->](5,0)--(4.5,.5);
\draw[style=thick](4.5,.5)--(4,1);

\end{tikzpicture}

\vspace{.2in}

\raisebox{20pt}{$T' \stackrel{1}{\mapsto} T =$} \raisebox{7pt}{\young(11\blda,\bldb\bldc7,568)} \raisebox{20pt}{$ \longrightarrow $}
\begin{tikzpicture}[baseline=0cm, scale=.5]
\draw[style=thick] (3.5,0)--(11.5,0);

\draw[radius=.08, fill=black](4,0)circle;
\draw[radius=.08, fill=black](5,0)circle;
\draw[radius=.08, fill=black](6,0)circle;
\draw[radius=.08, fill=black](7,0)circle;
\draw[radius=.08, fill=black](8,0)circle;
\draw[radius=.08, fill=black](9,0)circle;
\draw[radius=.08, fill=black](10,0)circle;
\draw[radius=.08, fill=black](11,0)circle;

\draw[style=thick, ->] (11,0) arc (0:90: 3.5cm);
\draw[style=thick](7.5,3.5) arc (90:180:3.5cm);

\draw[xshift=2cm, style=thick,->](4, 0) -- (4,.5);
\draw[xshift=2cm, style=thick](4,.5)--(4,1);
\draw[xshift=2cm, radius=.08, fill=black](4,1)circle;
\draw[xshift=2cm, style=thick,->](3,0)--(3.5,.5);
\draw[xshift=2cm, style=thick](3.5,.5)--(4,1);
\draw[xshift=2cm, style=thick,->](5,0)--(4.5,.5);
\draw[xshift=2cm, style=thick](4.5,.5)--(4,1);
\draw[xshift=5cm, style=thick,->](4, 0) -- (4,.5);
\draw[xshift=5cm,style=thick](4,.5)--(4,1);
\draw[xshift=5cm,radius=.08, fill=black](4,1)circle;
\draw[xshift=5cm,style=thick,->](3,0)--(3.5,.5);
\draw[xshift=5cm,style=thick](3.5,.5)--(4,1);
\draw[xshift=5cm,style=thick,->](5,0)--(4.5,.5);
\draw[xshift=5cm,style=thick](4.5,.5)--(4,1);
\end{tikzpicture}
\caption{Joining two webs.}\label{joinweb}
\end{figure}
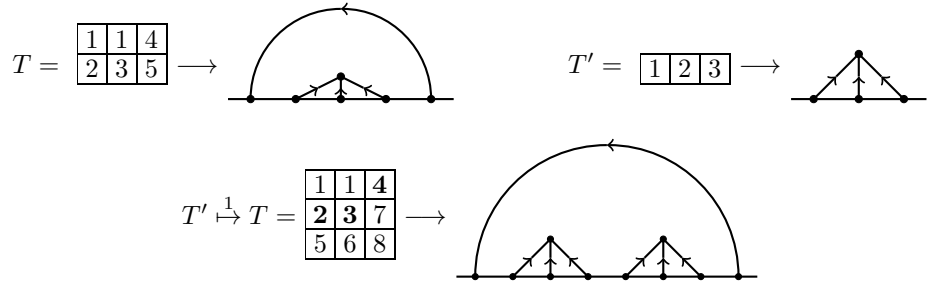

In the case where $T$ has content $s = s_1 \cdots s_{\ell_{1}}$ and $T'$ has content $t = t_1 \cdots t_{\ell_{2}}$, the web $T'\stackrel{i}{\mapsto} T$ will be semistandard with content $s_1 \cdots s_i t_1 \cdots t_{\ell_{2}} s_{i+1} \cdots s_{\ell_{1}}$. The following theorem, illustrated in Figure \ref{joinweb}, is a straightforward generalization of Tymoczko's result~\cite{T}.
\begin{theorem}
Shuffling of semistandard tableaux corresponds to the join of webs. Specifically, shuffling $T'$ into $T$ at $i$ corresponds to inserting the web $w_{T'}$ into the web $w_T$ between vertices $i$ and $i+1$ of $T$.
\end{theorem}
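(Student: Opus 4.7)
The plan is to reduce the statement to Tymoczko's corresponding theorem for standard tableaux~\cite{T} via the standardization $T_s \mapsto \widetilde{T_s} \mapsto \widetilde{T_s}'$ developed in Section~3. Set $m_T(i) := \sum_{j \leq i} \lambda_{s,j}$, the number of boxes of $T$ with entry at most $i$. I would first verify directly from the definitions the identity
\[
\widetilde{T' \stackrel{i}{\mapsto} T} \;=\; \widetilde{T'} \stackrel{m_T(i)}{\mapsto} \widetilde{T}
\]
as standard tableaux of shape $3^{n_1+n_2}$. This is a bookkeeping check: the shuffle is column-preserving, and the standardization rule (the leftmost occurrence of a repeated value receives the smaller label) depends only on the column structure restricted to each of the three blocks carved out by the shuffle, so the two operations commute. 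Taking conjugates of both sides and using that conjugation sends the column-preserving shuffle on shape $3^N$ tableaux to the row-preserving shuffle on shape $N^3$ tableaux---which is precisely Tymoczko's shuffle on standard three-row tableaux---one obtains
\[
\widetilde{T' \stackrel{i}{\mapsto} T}' \;=\; \widetilde{T'}' \stackrel{m_T(i)}{\mapsto} \widetilde{T}'.
\]

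Tymoczko's theorem then yields that $w_{\widetilde{T'}' \stackrel{m_T(i)}{\mapsto} \widetilde{T}'}$ is obtained by inserting $w_{\widetilde{T'}'}$ into $w_{\widetilde{T}'}$ between boundary vertices $m_T(i)$ and $m_T(i)+1$. The remaining step is to check that the boundary-edge contractions used to pass from $w_{\widetilde{T_s}'}$ to $w_{T_s}$ commute with this join. By Lemma~\ref{tausetlemma}, each such contraction involves an adjacent pair of boundary vertices corresponding to a repeated value in the semistandard content. In the shuffled tableau every such pair lies entirely in one of three disjoint blocks---$\{1, \ldots, m_T(i)\}$ coming from values $\leq i$ of $T$, $\{m_T(i)+1, \ldots, m_T(i)+3n_2\}$ coming from $T'$, or $\{m_T(i)+3n_2+1, \ldots, 3(n_1+n_2)\}$ coming from values $>i$ of $T$---and crucially no such pair straddles the seam between vertices $m_T(i)$ and $m_T(i)+1$. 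The contractions in the three blocks are therefore independent of the insertion and may be carried out after joining, producing $w_T$ with $w_{T'}$ inserted between vertices $i$ and $i+1$, as required.

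The main obstacle is the bookkeeping in the first step: proving the identity $\widetilde{T' \stackrel{i}{\mapsto} T} = \widetilde{T'} \stackrel{m_T(i)}{\mapsto} \widetilde{T}$ requires carefully tracking how each repeated value in the three regions of the shuffle is relabeled on each side. Once this identity is established, compatibility with conjugation and with the boundary contractions follows directly from the structural descriptions of those operations, and the theorem follows.
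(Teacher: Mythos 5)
Your proposal is correct and follows the route the paper intends: the paper states this theorem without proof, calling it ``a straightforward generalization of Tymoczko's result,'' and your argument---commuting standardization with the column-preserving shuffle, passing through conjugation to invoke Tymoczko's join theorem for standard tableaux, and checking that the boundary contractions never straddle the insertion seam---is precisely the bookkeeping needed to make that generalization rigorous. I see no gaps.
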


\bibliographystyle{plain}
\bibliography{SemiStd}

\end{document}